\newcommand{\hd}{d_{\Delta}}
\newcommand{\hdj}{d_{\Delta_{j}}}
\newcommand{\dms}{d_{\mathrm{m}_{_{\sigma(k)}}}}
\newcommand{\dmj}{d_{\mathrm{m}_{_{j}}}}
\newcommand{\dMs}{d_{\mathrm{M}_{_{\sigma(k)}}}}
\newcommand{\dMj}{d_{\mathrm{M}_{_{j}}}}
\newcommand{\dmone}{d_{\mathrm{m}_{_{1}}}}
\newcommand{\dMone}{d_{\mathrm{M}_{_{1}}}}
\newcommand{\dmtwo}{d_{\mathrm{m}_{_{2}}}}
\newcommand{\dMtwo}{d_{\mathrm{M}_{_{2}}}}
\newcommand{\xp}{\hat{x}}
\newcommand{\reals}{\mathbb{R}}
\newcommand{\naturals}{\mathbb{N}}
\newcommand{\A}{\mathrm{A}}
\newcommand{\B}{\mathrm{B}}
\newcommand{\K}{\mathrm{K}}
\newcommand{\F}{\mathrm{F}}
\newcommand{\Y}{\mathrm{Y}}
\newcommand{\Q}{\mathrm{Q}}
\newcommand{\Lm}{\mathrm{L}}
\newcommand{\J}{\mathrm{J}}
\newcommand{\Jf}{\mathfrak{I}}
\newcommand{\Xm}{\mathrm{X}}
\newcommand{\R}{\mathrm{R}}
\newcommand{\V}{\mathrm{V}}
\newcommand{\M}{\mathrm{M}}
\newcommand{\U}{\mathrm{U}}
\newcommand{\W}{\mathrm{W}}
\newcommand{\Z}{\mathrm{Z}}
\newcommand{\Ss}{\mathrm{S}}
\newcommand{\I}{\mathrm{I}}
\newcommand{\Ad}{\mathrm{A_d}}
\newcommand{\An}{\mathrm{A_n}}
\newcommand{\Am}{\mathrm{A_{m}}}
\newcommand{\AM}{\mathrm{A_{M}}}
\newcommand{\Amj}{\mathrm{A_{m_{_{j}}}}}
\newcommand{\Amjbar}{\xbar{\Am}_{_j}}
\newcommand{\Amjbarone}{\xbar{\Am}_{_1}}
\newcommand{\Amjbartwo}{\xbar{\Am}_{_2}}
\newcommand{\AMj}{\mathrm{A_{M_{_{j}}}}}
\newcommand{\AMjbar}{\xbar{\AM}_{_j}}
\newcommand{\AMjbarone}{\xbar{\AM}_{_1}}
\newcommand{\AMjbartwo}{\xbar{\AM}_{_2}}
\newcommand{\Amone}{\mathrm{A_{m_{_{1}}}}}
\newcommand{\AMone}{\mathrm{A_{M_{_{1}}}}}
\newcommand{\Amtwo}{\mathrm{A_{m_{_{2}}}}}
\newcommand{\AMtwo}{\mathrm{A_{M_{_{2}}}}}
\newcommand{\Pp}{\mathrm{P}}
\newcommand{\hc}{& \hspace{0.2cm}}
\newcommand{\T}{\top}
\newcommand{\Pj}{\Pp_{{_j}}} 
\newcommand{\Qonej}{\Q_{1_{_j}}} 
\newcommand{\Qtwoj}{\Q_{2_{_j}}} 
\newcommand{\Qthree}{\Q_{3_{_1}}} 
\newcommand{\Zonej}{\Z_{1_{_j}}} 
\newcommand{\Ztwoj}{\Z_{2_{_j}}} 
\newcommand{\Zthree}{\Z_{3_{_1}}} 
\newcommand{\Pjb}{\xbar{\Pp}_{{_j}}} 
\newcommand{\Qonejb}{\xbar{\Q}_{1_{_j}}} 
\newcommand{\Qtwojb}{\xbar{\Q}_{2_{_j}}} 
\newcommand{\Qthreeb}{\xbar{\Q}_{3_{_1}}}
\newcommand{\Zonejb}{\xbar{\Z}_{1_{_j}}} 
\newcommand{\Ztwojb}{\xbar{\Z}_{2_{_j}}} 
\newcommand{\Zthreeb}{\xbar{\Z}_{3_{_1}}} 
\newtheorem{remark}{Remark}{}
\newtheorem{problem}{Problem}
\newtheorem{thm}{Theorem}
\newtheorem{cor}{Corollary}
\newtheorem{lem}{Lemma}
    \newcommand\undermat[2]{%
  \makebox[0pt][l]{$\smash{\overbrace{\phantom{%
    \begin{matrix}#2\end{matrix}}}^{\text{$#1$}}}$}#2}
\newcommand*\xbar[1]{%
   \hbox{%
     \vbox{%
       \hrule height 0.7pt % The actual bar
       \kern0.35ex%         % Distance between bar and symbol
       \hbox{%
         \kern-0.0em%      % Shortening on the left side
         \ensuremath{#1}%
         \kern-0.0em%      % Shortening on the right side
       }%
     }%
   }%
} 
\definecolor{Ao}{rgb}{0.0, 0.5, 0.0}
\definecolor{blue-violet}{rgb}{0.54, 0.17, 0.89}
\begin{document}

\title{Systems with both constant and time-varying delays: a switched systems approach and application to observer-controller co-design}
% \title{Switched systems approach to stability of systems with both constant and time-varying delays \textcolor{blue}{and application to observer-controller co-design}}
% \title{Stability of systems with both constant and time-varying delays and application to compensator design}
\author{T.~Alves Lima \and M.~Della Rossa \and F.~Gouaisbaut \and R.~Jungers \and S.~Tarbouriech
\thanks{This study was financed in part by the ANR project HANDY 18-CE40-0010 and the European Research Council (ERC) under the \emph{European Union's Horizon 2022 research and innovation programme} under grant agreement No 864017 - L2C.
(Corresponding author: T. Alves Lima.)}
\thanks{T. Alves Lima, M. Della Rossa, and R. Jungers are with ICTEAM Institute, Universit\'{e} Catholique de Louvain, 1348, Louvain-la-Neuve, Belgium {\{thiago.alveslima, matteo.dellarossa, raphael.jungers\}@uclouvain.be}. S. Tarbouriech and F. Gouaisbaut are with LAAS-CNRS, Université de Toulouse, CNRS, Toulouse, France {\{tarbour, fgouaisb\}@laas.fr}.}
}

\maketitle

\begin{abstract} In this paper, we study the application of switched systems stability criteria to derive delay-dependent conditions for systems affected by both a constant and a time-varying delay. The main novelty of our approach lies on the use of path-complete Lyapunov techniques along with the proposition of a new modified functional to obtain convex analysis conditions while avoiding the need of computing a dwell time for each mode in a switched system representation, as usual in the \textit{switched approach} for time-delay systems. Furthermore, we leverage the developed analysis to obtain LMIs for the closed-loop stabilization of systems with time-varying sensor delays by means of an observer-based compensator. A numerical example illustrates the proposed methods.  
\end{abstract}

\section{Introduction}
\label{intro}

Time delay appears in a wide variety of systems and is frequently caused by transport or losses of mass/information. For a review of analysis and stabilization techniques for time-delay systems, we refer to~\cite{Fridman_2014}. 

Many works dealing with systems subject to time-varying delays have opted for a \emph{switched} formulation, considering the value of the delay as a source of switching behavior; for an overview concerning switching systems analysis see~\cite{liberzon}. Applications of this modeling strategy among the delay systems literature has been explored in multiple contexts. In~\cite{HETEL2008}, the equivalence between the existence of multiple Lyapunov functions for a switched delay-augmented representation and the existence of general quadratic Lyapunov-Krasovskii functionals (LKFs) for the original time-varying delay system is demonstrated. Similar equivalence results were proven, in the continuous-time setting, in~\cite{HaiMas15}. 
Related connections between switched systems and delay systems Lyapunov-based stability conditions were studied more recently in~\cite{8793178,Vittorio2020}. \textcolor{black}{The relations between switched systems, systems with data-losses and delay systems is studied in~\cite{JunKun18}, in which controllability and observability conditions are proposed.}
In addition to these general results, the switched-representation has been considered, while avoiding the so-called delay state-augmentation, in~\cite{JIANG2010313}, for the continuous-time case, splitting the delay interval in multiple zones and then imposing a dwell time on each subsystem, following the approach of~\cite[Section 3]{liberzon}. \textcolor{black}{In~\cite{SUN20082902}, stability of discrete time-delay systems is approached splitting the delay interval in two zones, leading to a $2$-mode delay switched system representation, with one subsystem possibly unstable; the overall stability is then ensured imposing dwell time and persistence of switching assumptions. }
 
 \textcolor{black}{
In this paper we study stability of delay systems with \emph{both} a bounded \emph{time-varying} delay $d(k)\in [d_m,d_M]$ for $k\in \naturals$ and a \emph{nominal} constant delay $d_n\in [d_m,d_M]$. The motivation to study this class of systems, and in particular to consider a term depending on a constant delay $d_n$, comes from the case of an observer-based control closed loop, where the observer is designed by considering an estimate of the unknown time-varying delay in an attempt to observe the non-delayed state $x(k)$. The introduction of the constant delay $d_n$ can improve the features of the closed-loop system in comparison with direct feedback of the output $y(k)$ which is affected by the time-varying delay $d(k)$. Nonetheless, the LKF-based stability conditions available in the literature do not model well the interaction between the constant $d_n$ and the time-varying delay $d(k)$, which motivates the switched systems representation used in this paper.}

Differently from \cite{HETEL2008}, we do not represent the considered delay system as a delay-free one, but rather as a switched system composed by two delay subsystems where the value of the time-varying delay $d(k)$  \emph{with respect to} the constant delay $d_n$ is used to define the underlying switching signal. To study stability and stabilizability conditions, we make use of path-complete Lyapunov functions approach (introduced in~\cite{AhmJun14} for delay-free switched systems). In this framework, the structure of the sufficient Lyapunov conditions is given by an underlying flexible combinatorial structure, a \emph{path-complete graph}, which somehow encodes the switching signals the system will follow. These techniques have been proven to be less conservative than more classical multiple Lyapunov techniques for switched systems. For an overview see~\cite{AhmJun14,PhiEss16} and references therein. We then leverage the developed conditions for the stabilization problem of systems with output time-varying delays. With the aid of Finsler's Lemma and algebraic manipulations, the proposed conditions  for the design of stabilizing nominal-delay observers are rewritten in the form of linear matrix inequalities (LMIs).

Summarizing, although splitting the time-delay interval in sub-zones is a rather common idea in delay systems literature (\cite{JIANG2010313,SUN20082902}), some important specificities of our manuscript can be enlisted: i) We propose a modified LKF structure (see Section III) that allows obtaining feasible convex conditions for the stability analysis in the arbitrary switching case (no dwell-time nor delay-free representations are needed, as commonly done in the literature). ii) The relation between a constant delay $d_n$ and a time-varying one $d(k)$ is taken into account by means of the switching signal, iii) the extension of path-complete Lyapunov criteria to LKFs, iv) the extension of the developed theory to derive design conditions in the form of LMIs for the observer-based \emph{stabilization} of systems with unknown output time-varying delays.

% i) The presence of a constant delay $d_n$  (and not only a time-varying delay $d(k)$), ii) the use of a path-complete Lyapunov approach for stability, iii) the extension of the developed theory to derive design conditions in the form of LMIs for the observer-based \emph{stabilization} of systems with unknown output time-varying delays.}

% The rest of this paper is organized as follows. The problem formulation and the main results are presented in Sections \ref{sec:pb} and \ref{sec:main}, respectively, for the case of output time-varying delays. Extensions to the cases of delays in the input and in the state are briefly presented in Section \ref{sec:extensions}. Simulation results are presented in Section \ref{sec:simu}. Finally, concluding remarks and future perspectives are given in Section \ref{sec:conclu}.

\textbf{Notation.} Given $\Y \in \reals^{n \times m}$, $\Y^{\T} $ denotes its transpose. %$\Y_{\left(i\right)}$ denotes its $i$th row, while for $v$ in $ \reals^{m}$, $v_{\left(i\right)}$ denotes its $i$th component. 
Given $\W = \W^{\T}$, $\Z = \Z^{\T} $ in $ \reals^{n \times n}$, $\W \succ \Z$ ($\W \succeq \Z$)  means that $\W-\Z$ is positive definite (positive semi-definite). With $\mathbb{S}_n^{+}$ we denote the set of symmetric positive definite matrices. $\I$ and $0$ denote identity and null matrices of appropriate dimensions, with their dimensions explicitly stated whenever relevant. The operator \textit{He}$\{\Y\}$ denotes \textit{He}$\{\Y\}=\Y+\Y^{\T}$. The $\star$ symbol denotes symmetric blocks in the expression of a matrix. For matrices $\W$ and $\Z$, \textit{diag}$(\W,\Z)$ corresponds to the block-diagonal matrix. %The operator $\ceil*{\cdot}$ denotes the ceiling function. 

\section{Theoretical preliminaries}\label{sec:main}
In this paper, we study stability of systems of the form 
\begin{align}
   \hspace{-0.3cm} \begin{cases}
              x(k+1)=\A x(k)\hspace{-0.05cm}+\An x(k-d_n)\hspace{-0.05cm}\\~~~~~~~~~~~~~+\hspace{-0.05cm}\Ad x(k-d(k)),~\forall k\geq0\\
          x(k) = \phi(k), ~\forall k \in \left[-d_M,0 \right] 
          \end{cases}\label{clsystem}
          \end{align}
\noindent where $\phi(k)$ is the initial condition at the interval $\left[-d_M,0 \right]$, $x(k) \in \reals ^{n}$ is the plant state vector, $d(k)$ is a time-varying delay $1 \leq d_m \leq d(k) \leq d_M$ with known lower ($d_m$) and upper ($d_M$) limits, whereas the value of $d(k)$ at each sampling time $k$ is unknown. Furthermore, $d_n \in [d_m,d_M]$ is a constant delay within the same bounds.
As a preliminary step in the analysis of~\eqref{clsystem}, we study the stability for systems without constant delay (i.e. the case $\An=0$ in~\eqref{clsystem}). This allows us to present, for a simpler system, the tools and techniques which will be generalized, in Section~\ref{sec:main}, to the setting of system~\eqref{clsystem}.
\subsection{Delay-dependent stability of time-delay systems}
Consider the time-delay system
\begin{align}
  \begin{cases}
              x(k+1)=\A x(k) + \Ad x(k-d(k)),~\forall k\geq0\\
          x(k) = \phi(k),~\forall k \in \left[-d_M,0 \right] 
          \end{cases}\label{classicaldelaysystem}
          \end{align}
\noindent where $\phi(k)$ is the initial condition at the interval $\left[-d_M,0 \right]$, $x(k) \in \reals ^{n}$ is the plant state vector, and $d(k)$ is a time-varying delay $1 \leq d_m \leq d(k) \leq d_M$. When studying the stability of \eqref{classicaldelaysystem}, a common approach is to search for a Lyapunov-Krasovskii functional~\cite{Fridman_2014, Gu2003}. More precisely, we consider $V: \underbrace{\reals ^n \times \dotsc \times \reals ^n}_{d_M+1 \text{ times}} \rightarrow \reals ^+$,~and,~using~the~convention
\begin{equation}\label{eq:LKfunctional}
\begin{split}
    V(k) &\coloneqq
V(\xbar{x}(k)), \\
\xbar{x}(k) &\coloneqq \left [x^\top(k) \;x^\top(k-1)\;\dots\,\; x^\top(k-d_M)\right]^\top,
\end{split}
\end{equation}
we require that $V(k) > 0$ for all $\xbar{x}(k) \neq 0$ (and we say that $V$ is \emph{positive definite}), and the forward difference of $V$ with respect to~\eqref{classicaldelaysystem} is negative, i.e., $\Delta V(k) \coloneqq V(k+1)-V(k) < 0$ along the trajectories of~\eqref{classicaldelaysystem}. In this case, we say that $V$ is a \emph{Lyapunov Krasovskii functional (LKF)} for system~\eqref{classicaldelaysystem}. In order to obtain stability conditions in the form of LMIs, many LKF structures have been proposed in the literature, see \cite{Fridman_2014,Gu2003} and references therein. The manipulation of summation inequalities in the LKFs commonly lead to bounds of the form $\Delta V(k) \leq \xi^{\T}(k){\Phi}(d(k),d_m,d_M)\xi(k)$, where ${\Phi}(d(k),d_m,d_M)$ is a matrix-valued function and $\xi(k)$ is a vector function depending explicitly on $x(k+1)$, $x(k)$, $x(k-d(k))$, and also on the states delayed by the minimum and maximum delays, i.e., $x(k-d_m)$ and $x(k-d_M)$ (see, for example,~\cite{Shao_2011,Kwon_2013,Seuret_2015,Hien_2016}). LMI conditions are then obtained by either replacing $x(k+1)$ with the equation in~\eqref{classicaldelaysystem}  or by applying Finsler's Lemma (see Subsection \ref{standardLMIs}). Since standard manipulations already lead to bounds which depend on $x(k-d_m)$ and $x(k-d_M)$, the exact same LKF and manipulation procedure used to obtain stability conditions for~\eqref{classicaldelaysystem} can be applied for systems of the more general form 
\begin{equation} \label{systemswithupperbounds}
\begin{cases}
\!\begin{aligned}%[b]
          x(k&+1)=\A x(k) + \Am x(k-d_m)\\ &+ \AM x(k-d_M) + \Ad x(k-d(k)), ~\forall k\geq0
           \end{aligned}\\
          x(k) = \phi(k), ~\forall k \in \left[-d_M,0 \right].
          \end{cases}
\end{equation}

\begin{remark}
Although at first glance system \eqref{systemswithupperbounds} may seem more complex than system \eqref{clsystem}, its stability analysis in terms of convex conditions derived from LKFs is actually simpler, as one can directly employ the same traditional conditions used for \eqref{classicaldelaysystem}, which is not true for \eqref{clsystem}. In the case of \eqref{clsystem}, more complex LKFs involving more summation terms taking into account the delay $d_n$ that can be different from the maximum and minimum delay need to be applied. In this paper, we plan to leverage the same simpler conditions already stablished to \eqref{classicaldelaysystem} and \eqref{systemswithupperbounds} by using a switched representation and small changes to the structure of the LKF. This will be clear in the next sections.  
\end{remark}

\subsection{Standard analysis conditions with Finsler's Lemma}\label{standardLMIs}
\textcolor{black}{In this subsection we derive LMIs conditions, via a Lyapunov Krasovskii construction,
for stability of~\eqref{systemswithupperbounds}.
Since it represents a crucial tool in our algebraic manipulation, we recall here the celebrated Finsler's Lemma,~\cite{Mauricio_2001}.
}
% \textcolor{red}{When $\Jf$ is an arbitrary full decision matrix, conditions expressed in the form of item (ii) of Lemma \ref{lemma:Finsler} are both necessary and sufficient to guarantee the fulfilment of item (i). Nevertheless, when doing controller design, it is often needed to impose some special forms to such multiplier. In this situation, satisfaction of item (ii) is generally sufficient-only for (i), although it might retain both necessity and sufficiency in some cases (see for example \cite{PIPELEERS2009}). Even though the choice of particular structures for $\Jf$ can introduce conservatism, it allows the gathering of simple design conditions in the form of LMIs, as will be clear in the proof of the main theorem of the paper. }
\begin{lem}\label{lemma:Finsler} \cite{Mauricio_2001} Consider ${\Phi}={\Phi}^{\T} \in \reals^{n_{\xi} \times n_{\xi}}$, and $\Gamma \in \reals^{m_{\xi} \times n_{\xi}}$. The following statements are equivalent:
\begin{enumerate}[label=(\roman*)]
    \item  $\xi^{\T} {\Phi} \xi<0$, $\;\forall\xi \neq 0$ such that~$\Gamma \xi = 0$. \label{lemma:Finsler:item1}
    \item $\exists \Jf \in \reals^{n_{\xi} \times m_{\xi}}:$ ${\Phi}+\Jf \Gamma+\Gamma^{\T}\Jf^{\T} \prec 0$. \label{lemma:Finsler:item2}
    \item ${\Gamma^{\perp}}^{\T}{\Phi}{\Gamma^{\perp}} \prec 0$, where $\Gamma{\Gamma^{\perp}}=0$. \label{lemma:Finsler:item3}
\end{enumerate}
\end{lem}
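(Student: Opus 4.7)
The plan is to establish the three equivalences via the cycle $(ii) \Rightarrow (i) \Leftrightarrow (iii) \Rightarrow (ii)$, with the last implication carrying the main difficulty. The first two directions are essentially algebraic, while the last one requires a genuine multiplier construction.

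First, $(ii) \Rightarrow (i)$ is immediate: for any $\xi \neq 0$ with $\Gamma \xi = 0$, the cross terms $\xi^{\T} \Jf \Gamma \xi$ and $\xi^{\T} \Gamma^{\T} \Jf^{\T} \xi$ both vanish, so $\xi^{\T}(\Phi + \Jf \Gamma + \Gamma^{\T} \Jf^{\T}) \xi = \xi^{\T} \Phi \xi$, which is strictly negative by (ii). For $(i) \Leftrightarrow (iii)$, I would exploit that, by definition, the columns of $\Gamma^{\perp}$ form a basis of $\ker \Gamma$. Every $\xi \in \ker \Gamma$ thus admits a unique representation $\xi = \Gamma^{\perp} \eta$, and since $\Gamma^{\perp}$ has full column rank, $\xi \neq 0$ iff $\eta \neq 0$. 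Substituting yields $\xi^{\T} \Phi \xi = \eta^{\T} {\Gamma^{\perp}}^{\T} \Phi \Gamma^{\perp} \eta$, so condition (i) is literally the statement ${\Gamma^{\perp}}^{\T} \Phi \Gamma^{\perp} \prec 0$.

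For $(iii) \Rightarrow (ii)$, the key ansatz is to search for a multiplier of the simple parametric form $\Jf = -\alpha \Gamma^{\T}$, with $\alpha > 0$ to be chosen, which reduces the target inequality to $\Phi - 2 \alpha \Gamma^{\T} \Gamma \prec 0$. I would establish that the latter holds for $\alpha$ large enough via a compactness/continuity argument on the unit sphere $S = \{\xi \in \reals^{n_\xi} : \|\xi\| = 1\}$: by (i), there exists $c > 0$ such that $\xi^{\T} \Phi \xi \leq -c$ on the compact set $S \cap \ker \Gamma$; continuity of $\xi \mapsto \xi^{\T} \Phi \xi$ propagates a bound of $-c/2$ to some neighborhood $S_{\varepsilon} = \{\xi \in S : \|\Gamma \xi\| \leq \varepsilon\}$; on the complement $S \setminus S_{\varepsilon}$ one has $\|\Gamma \xi\|^{2} > \varepsilon^{2}$, so the penalty $-2\alpha \|\Gamma \xi\|^{2}$ can be made to dominate the (bounded) quadratic form $\xi^{\T} \Phi \xi$ by taking $\alpha$ sufficiently large. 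Combining the two regimes gives $\xi^{\T}(\Phi - 2 \alpha \Gamma^{\T} \Gamma) \xi < 0$ uniformly on $S$, and hence (ii).

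The main obstacle is precisely this last step: one must control $\xi^{\T} \Phi \xi$ simultaneously in two regimes---near $\ker \Gamma$, where (i) provides a qualitative bound, and away from it, where the quadratic penalty takes over---and then pick $\alpha$ large enough to dominate both. An alternative route would be an explicit construction using the orthogonal decomposition $\reals^{n_{\xi}} = \ker \Gamma \oplus \mathrm{range}(\Gamma^{\T})$ to block-diagonalize the problem in adapted coordinates and read off an admissible $\Jf$ directly, but the compactness argument is arguably the most self-contained.
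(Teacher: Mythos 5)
The paper does not prove this statement: it is Finsler's Lemma, imported verbatim from the cited reference \cite{Mauricio_2001} and used as a black box, so there is no internal proof to compare against. Your argument is correct and is essentially the classical one: $(ii)\Rightarrow(i)$ by annihilation of the cross terms on $\ker\Gamma$, $(i)\Leftrightarrow(iii)$ by parametrizing $\ker\Gamma$ through the full-column-rank matrix $\Gamma^{\perp}$ (you are right to read the condition $\Gamma\Gamma^{\perp}=0$ as ``the columns of $\Gamma^{\perp}$ form a basis of $\ker\Gamma$''; with a literal reading, e.g.\ $\Gamma^{\perp}=0$, item $(iii)$ would be vacuous, and the paper's own use of the lemma in its Lemma~2 confirms the basis convention), and $(iii)\Rightarrow(ii)$ via the rank-deficient multiplier $\Jf=-\alpha\Gamma^{\T}$ together with the two-regime compactness estimate on the unit sphere. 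That last step is the only place where real work happens and your handling of it is sound: the neighborhoods $S_{\varepsilon}=\{\xi\in S:\|\Gamma\xi\|\le\varepsilon\}$ do shrink onto $S\cap\ker\Gamma$ (a subsequence argument gives this), so the bound $-c/2$ propagates, and on the complement the penalty $-2\alpha\|\Gamma\xi\|^{2}\le-2\alpha\varepsilon^{2}$ dominates the bounded form $\xi^{\T}\Phi\xi$ for $\alpha$ large. The degenerate cases ($\ker\Gamma=\{0\}$, or $S\cap\ker\Gamma=\emptyset$) are covered implicitly since the first regime is then vacuous. Nothing is missing.
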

Next, we review analysis conditions for systems~\eqref{systemswithupperbounds}.
We define $\hd:=d_M-d_m$, and the function $\gamma:\naturals\to \reals$ given~by
\begin{equation}\label{eq:gammafunc}
    \begin{cases}
            \gamma(d)=1, \text{ if } d=1, \\
            \gamma(d)=(d+1)/(d-1), \text{ if } d>1.
    \end{cases}
\end{equation}
\noindent Consider then the LKF-structure, inspired by \cite{Seuret_2015}, given by
\begin{equation}\label{LKFFunc}
    V(k) = V_{a}(k)+V_{b}(k)+V_{c}(k),
\end{equation}
\noindent where
\begin{equation}\label{eq:Lyapunovstrcture}
\begin{split}
&V_{a}(k) = w^{\T}(k)\Pp w(k), \\
&V_{b}(k) = \sum_{l=k-d_m}^{k-1} x^{\T}(l)\Q_1 x(l)+\sum_{l=k-d_M}^{k-d_m-1} x^{\T}(l)\Q_2x(l), \\
&\!\begin{aligned}%[b]
V_{c}(k) = & d_m  \sum_{l=-d_m+1}^{0} \sum_{i=k+l}^{k} \eta^{\T}(i)\Z_1\eta(i) \\ &+  \hd \sum_{l=-d_M+1}^{-d_m} \sum_{i=k+l}^{k} \eta^{\T}(i)\Z_2\eta(i),
\end{aligned}
\end{split}
\end{equation}
\noindent with $w(k):= \begin{bmatrix} x^{\T}(k) & \hspace{-0.15cm}\sum_{l=k-d_m}^{k-1} \hspace{-0.1cm}x^{\T}(l) & \hspace{-0.15cm}\sum_{l=k-d_M}^{k-d_m-1} x^{\T}(l) \end{bmatrix}^{\T}$ and $\eta(i)=x(i)-x(i-1)$.  
Supposing that ${\Pp} \in\mathbb{S}_{3n}^{+}$ and ${\Q}_1$, ${\Q}_2$, ${\Z}_1$, ${\Z}_2\in\mathbb{S}_{n}^{+}$ implies that $V$ is positive definite. 
\textcolor{black}{In what follows, we present a lemma allowing to perform stability analysis of~\eqref{systemswithupperbounds} using the Lyapunov-Krasovskii structure introduced in~\eqref{eq:Lyapunovstrcture}. The proposed LMI conditions are equivalent in conservatism to the ones in~\cite[Theorem 5]{Seuret_2015}, but are slightly different due to the application of Lemma~\ref{lemma:Finsler}. In~\cite{Seuret_2015}, instead, the stability conditions are obtained by direct substitution of the dynamics $x(k+1)$ in the manipulation of $\Delta V(k)$. The choice of using Lemma~\ref{lemma:Finsler} will be justified in Section~\ref{sec:observerdesign}. Some steps of the proof of the lemma below, especially involving the bounding of the term $\Delta V_c(k)$, are not made completely explicit, in order to avoid repetition of the manipulations already presented in~\cite{Seuret_2015}, to which we refer for the details. }

\begin{lem}\label{lem:stab}
Assume that there exist matrices ${\Pp} \in\mathbb{S}_{3n}^{+}$, ${\Q}_1$, ${\Q}_2$, ${\Z}_1$, ${\Z}_2\in\mathbb{S}_{n}^{+}$, ${\Xm}\in\reals^{2n\times2n}$ such that 
\begin{equation}\label{main_starnd}
{\Psi}_z \succ 0,
\!\begin{aligned}
~~{\Gamma^{\perp}}^{\T}{\Phi}(d_m){\Gamma^{\perp}} \prec 0, ~~{\Gamma^{\perp}}^{\T}{\Phi}(d_M){\Gamma^{\perp}} \prec 0,
\end{aligned}
\end{equation}
\noindent  hold with 
$
    {\Gamma^{\perp}} =
  \begin{bmatrix} 
  \A & \A_m & \Ad &  \A_M & 0_{n \times 3n} \\
  & & \I_{7n} &
   \end{bmatrix}$,
\begin{align*}
 &{\Phi}(d) = \Phi_1(d)+\mathcal{Q}
   + \Phi_3(d_m,d_M), \\
   &\Phi_1(d)=\W_{2}^{\T}(d_m,d_M){\Pp}\W_{2}(d_m,d_M)\\  
   &~~~~~~~~-\W_{1}^{\T}(d_m,d_M){\Pp}\W_{1}(d_m,d_M) \\ 
   &~~~~~~~~+\text{He}\left\{\W^{\T}(d){\Pp}\left(\W_2(d_m,d_M)-\W_1(d_m,d_M)\right)\right\}, \\
   &\Phi_3(d_m,d_M)=\W_3^{\T}(d_m^2 {{\Z}}_1+\hd^2 {{\Z}}_2) \W_3\\
   &~~~~~~~~-\W_{s}^{\T} \mathscr{Z}_1(d_m)\W_{s}-\W_{\Psi}^{\T}{\Psi}_z\W_{\Psi},
\end{align*}
\begin{equation*}
    {\Psi}_z = \begin{bmatrix}
    ~\mathscr{Z}_2 & {\Xm}~ \\
    ~\star & \mathscr{Z}_2~
    \end{bmatrix},
\end{equation*}
\begin{equation*}
    \mathscr{Z}_1(d_m) = \text{diag}\left({\Z}_1,3\gamma(d_m){\Z}_1\right), \mathscr{Z}_2 = \text{diag}\left({\Z}_2,3{\Z}_2\right),
\end{equation*}
$ \mathcal{Q} = \text{diag}(0,{\Q}_1,{\Q}_2-{\Q}_1,0,-{\Q}_2,0,0,0)$, $\gamma$ defined in \eqref{eq:gammafunc} and $\W_{\Psi},\M,\W_{s},\W_{3},\W_{1}(d_m,d_M)$, $ \W_{2}(d_m,d_M), \W(d)$ are given in Appendix~\ref{FirstAppendix}. Then, the LKF $\V$ defined in~\eqref{LKFFunc}-\eqref{eq:Lyapunovstrcture} is positive definite and satisfies $\V(k+1)-\V(k)<0$, for all $\xbar{x}(k)\neq 0$. In particular, system~\eqref{systemswithupperbounds} is asymptotically stable  for any time-varying delay $d_m \leq d(k) \leq d_M$.
\end{lem}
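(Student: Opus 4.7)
The plan is to derive an upper bound of the form $\Delta V(k) \leq \xi^{\T}(k)\Phi(d(k))\xi(k)$ for a suitably augmented vector $\xi(k)$ subject to the affine constraint $\Gamma\xi(k)=0$ that encodes the dynamics~\eqref{systemswithupperbounds}, and then apply Finsler's Lemma to rewrite the negativity of this quadratic form, over the kernel of $\Gamma$, as the LMI conditions in~\eqref{main_starnd}. Positive definiteness of $V$ is immediate from $\Pp\in \mathbb{S}_{3n}^+$ and $\Q_1,\Q_2,\Z_1,\Z_2 \in \mathbb{S}_n^+$, so the bulk of the argument is the bound on $\Delta V(k)$.

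First, I would assemble $\xi(k)$ containing $x(k+1)$, $x(k)$, $x(k-d_m)$, $x(k-d(k))$, $x(k-d_M)$, and the three summation blocks entering $w(k)$, so that the matrices $\W_1(d_m,d_M)$, $\W_2(d_m,d_M)$ express $w(k)$ and $w(k+1)$ as linear functions of $\xi(k)$, and $\W(d)$ picks up the term $x(k-d(k))$ that enters $w(k+1)-w(k)$ through the shifted summations. Then $\Delta V_a(k) = w^{\T}(k+1)\Pp w(k+1) - w^{\T}(k)\Pp w(k)$ produces exactly the structure of $\Phi_1(d(k))$, while $\Delta V_b(k)$ telescopes into the block-diagonal matrix $\mathcal{Q}$. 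The double sums in $\Delta V_c(k)$ yield, on one hand, the weight $\W_3^{\T}(d_m^2\Z_1+\hd^2\Z_2)\W_3$ on $\eta(k+1) = x(k+1)-x(k)$, and on the other the two residual single sums $\sum_{i=k-d_m}^{k-1}\eta^{\T}(i)\Z_1\eta(i)$ and $\sum_{i=k-d_M}^{k-d_m-1}\eta^{\T}(i)\Z_2\eta(i)$ entering with a negative sign.

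The principal obstacle is bounding these residual sums in a way that matches the $\Phi_3$ block of the statement. Following~\cite{Seuret_2015}, I would apply the discrete Wirtinger-based summation inequality to the $\Z_1$-sum, which produces the $-\W_s^{\T}\mathscr{Z}_1(d_m)\W_s$ term with the Wirtinger constant $3\gamma(d_m)$; for the $\Z_2$-sum, I would split it at the index $k-d(k)$, apply the Wirtinger inequality to each piece separately, and combine the two resulting $d(k)$-dependent reciprocal weights by the reciprocally-convex combination lemma with slack matrix $\Xm$, which is exactly what generates the positivity requirement $\Psi_z\succ 0$ together with the block $-\W_{\Psi}^{\T}\Psi_z\W_{\Psi}$. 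Since $\Phi(d)$ depends on $d$ only through $\Phi_1(d)$, and affinely via $\W(d)$, the two LMIs in~\eqref{main_starnd} together with convex combination imply ${\Gamma^{\perp}}^{\T}\Phi(d){\Gamma^{\perp}}\prec 0$ for every $d\in[d_m,d_M]$; item~\ref{lemma:Finsler:item3} of Lemma~\ref{lemma:Finsler} then yields $\xi^{\T}(k)\Phi(d(k))\xi(k)<0$ on the kernel of $\Gamma$, and hence $\Delta V(k)<0$ along all trajectories of~\eqref{systemswithupperbounds}.
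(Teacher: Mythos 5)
Your proposal follows essentially the same route as the paper's proof: the same augmented vector $\xi(k)$, the same decomposition $\Delta V = \Delta V_a + \Delta V_b + \Delta V_c$ with $\Phi_1$ and $\mathcal{Q}$ arising exactly as you describe, the Wirtinger summation inequality plus the reciprocally convex lemma (with slack $\Xm$ and the requirement $\Psi_z \succ 0$) to bound $\Delta V_c$, and finally item~\ref{lemma:Finsler:item3} of Lemma~\ref{lemma:Finsler} together with affinity of $\Phi(d)$ in $d$ to reduce to the two endpoint LMIs. In fact you spell out the splitting of the $\Z_2$-sum at $k-d(k)$ in more detail than the paper, which simply defers those manipulations to the proof of Theorem~5 in the cited reference.
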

\begin{proof}
 Consider the LKF \eqref{LKFFunc}-\eqref{eq:Lyapunovstrcture} and the augmented vector
\begin{equation*}
    \begin{aligned}
\xi(k) &:=
\left[\begin{matrix}
  x(k+1)^{\T} & x(k)^{\T} & x(k-d_m)^{\T}
\end{matrix}\right.\\
&\qquad\qquad
\left.\begin{matrix}
  {}x(k-d(k))^{\T} & x(k-d_M)^{\T} & v^{\T}
\end{matrix}\right]^{\T},
\end{aligned}
\end{equation*}
\noindent with $v = \begin{bmatrix}
v_1^{\T} & v_2^{\T} & v_3^{\T}  
\end{bmatrix}^{\T}$ given by $ v_1 = \frac{1}{d_m+1}\sum_{l=k-d_m}^{k}x(l)$, $v_2 = \frac{1}{d(k)-d_m+1}\sum_{l=k-d(k)}^{k-d_m}x(l)$, $v_3= \frac{1}{d_M-d(k)+1}\sum_{l=k-d_M}^{k-d(k)}x(l)$.
% \begin{align*}
%     v_1 &= \frac{1}{d_m+1}\sum_{l=k-d_m}^{k}x(l), \\
%     v_2 &= \frac{1}{d(k)-d_m+1}\sum_{l=k-d(k)}^{k-d_m}x(l), \\
%     v_3 &= \frac{1}{d_M-d(k)+1}\sum_{l=k-d_M}^{k-d(k)}x(l).
% \end{align*}
\noindent By evaluating the forward difference of $V_a(k)$ and $V_b(k)$, we obtain
\begin{subequations}
\begin{align}%[b]
\begin{split}
    \Delta V_{a}(k) = \xi^{\T}(k)  \Phi_1(d(k)) \xi(k),
\end{split}\label{eqP}\\
\begin{split}
     \Delta V_{b}(k) = \xi^{\T}(k)\mathcal{Q} \xi(k).
\end{split}\label{eqQ}\\
\intertext{Using a summation version of the Wirtinger's integral inequality from \cite{SEURET2013} and the reciprocally convex Lemma \cite{PARK2011}, in the proof of \cite[Theorem 5]{Seuret_2015} it is shown that}
\begin{split}
    \Delta V_{c}(k) \leq \xi^{\T}(k)\Phi_3(d_m,d_M)\xi(k),
\end{split}\label{eqWirtinger}
\end{align}
\end{subequations}where $\Phi_3(d_m,d_M)$, defined in Lemma \ref{lem:stab}, contains the matrix ${\Psi}_z$, which has to be symmetric positive definite and is composed by the LKF matrix $\Z_2$ and by the slack decision variable $\Xm$ in $\reals^{2n\times2n}$. By combining \eqref{eqP}, \eqref{eqQ} and \eqref{eqWirtinger}, the bound $\Delta V(k) \leq \xi^{\T}(k){\Phi}(d(k))\xi(k)$ is obtained, where ${\Phi}(d)$ is defined in Lemma \ref{lem:stab}.
Then, from Lemma~\ref{lemma:Finsler}, satisfaction of 
% \begin{equation*}
%     \begin{split}
%             \xi^{\T}(k){\Phi}(d(k))\xi(k)<0, ~\forall \xi \\
%             \text{such that } \Gamma\xi=0, \xi \neq 0
%     \end{split}
% \end{equation*}
\begin{align*}
            &\xi^{\T}(k){\Phi}(d(k))\xi(k)<0, ~\forall\, \xi \neq 0, \\
            &\text{such that} \;\underbrace{\begin{bmatrix}
-\I & \hspace{-0.1cm}\A & \hspace{-0.1cm} \A_m & \hspace{-0.1cm} \Ad & \hspace{-0.1cm} \A_M & \hspace{-0.1cm} 0_{n \times 3n}
\end{bmatrix}}_{\Gamma} \xi=0, 
\end{align*}
% $\xi^{\T}(k){\Phi}(d(k))\xi(k)<0$, $\forall \xi$ such that $\Gamma\xi=0$, $\xi \neq 0$ 
(and therefore of $\Delta \V(k) <0$) along the trajectories of \eqref{systemswithupperbounds} is equivalent to the satisfaction of ${\Gamma^{\perp}}^{\T}{\Phi}(d(k)){\Gamma^{\perp}} \prec 0$, where ${\Gamma^{\perp}}$ is such that the relation $ \Gamma {\Gamma^{\perp}} = 0$ holds. Since ${\Gamma^{\perp}}^{\T}{\Phi}(d(k)){\Gamma^{\perp}} \prec 0$ is affine with respect to $d(k)$, it suffices to evaluate it for $d_m$ and $d_M$, thus completing the proof. \end{proof}
% \begin{remark}
% Stability of \eqref{classicaldelaysystem} can be tested in a straightforward manner by making $\A_m=0$ and $\A_M=0$ in Lemma \ref{lem:stab}. 
% \end{remark}

% \textcolor{black}{Concerning the choice of the LKF structure in~\eqref{LKFFunc}-\eqref{eq:Lyapunovstrcture} and its manipulation in Lemma \ref{lem:stab}, we chose to review the strategy from \cite{Seuret_2015} due to the application of the Wirtinger inequality which yields less conservatism than those applying the classical inequalities as Jensen's one \cite{Lin_2008}. The last decades have witnessed intense research involving the proposal and application of less conservative summation inequalities for various structures of LKFs. The classical Jensen's inequality has been replaced by Wirtinger and Bessel inequalities, which yield enhanced results in the stability analysis of time-delayed systems. Although Bessel's one has been shown to yield the best results so far \cite{SEURET20151}, we decided to present the methodology in this work using the Wirtinger one due to better readability. However, other techniques for the manipulation of the LKF could be applied since the utilization of Lemma~\ref{lemma:Finsler} allows to decouple the evaluation of $\Delta V(k)$ from the system matrices when using form \ref{lemma:Finsler:item2} with the introduction of the slack variable $\Jf$ (see, for example, the book of \cite{Ebihara2015}). Such form of Lemma~\ref{lemma:Finsler} will be particularly useful when solving the controller design problem in Section~\ref{sec:observerdesign}. }

Concerning the choice of the LKF structure in~\eqref{LKFFunc}-\eqref{eq:Lyapunovstrcture} and its manipulation in Lemma \ref{lem:stab}, we have chosen to use the strategy from \cite{Seuret_2015} due to the application of the Wirtinger inequality which yields less conservatism than those applying the classical inequalities as Jensen's one \cite{Lin_2008}. The last decades have witnessed intense research involving the proposal and application of less conservative summation inequalities for various structures of LKFs. The classical Jensen's inequality has been replaced by Wirtinger and Bessel inequalities, which yield enhanced results in the stability analysis of time-delayed systems. Although Bessel's one has been shown to yield the best results so far \cite{SEURET20151}, we decided to present the methodology in this work using the Wirtinger one due to better readability. Notice that the use of slack variables introduced by proposition \ref{lemma:Finsler:item2} of Lemma \ref{lemma:Finsler} is not mandatory for analysis purposes. However, the use of slack variable $\Jf$ in proposition \ref{lemma:Finsler:item2} of Lemma \ref{lemma:Finsler} will be particularly useful when solving the controller design problem in Section~\ref{sec:observerdesign}.

\section{Switched representation and path-complete criteria}\label{sec:witchform}
\subsection{Description of the considered delay-switched systems}
\textcolor{black}{We introduce an alternative representation of~\eqref{clsystem} in order to adapt the techniques presented in Section~\ref{sec:main} in this setting. In particular, we rewrite system~\eqref{clsystem} in the form of a switched system composed by two delay subsystems, each one of the form~\eqref{systemswithupperbounds}.} This is possible by noting that, when the time-varying delay $d(k)$ is between the minimum delay $d_m$ and the constant delay $d_n$, $d_n$ can be viewed as the maximum system delay for a first subsystem; when $d(k)$ is between $d_n$ and the maximum delay $d_M$, $d_n$ can be viewed as the lower bound on the delay for a second subsystem. Therefore, the following two-modes switched system can be defined:
\begin{equation} \label{clsystemswitch}
\begin{cases}
x(k+1) = f_{\sigma(k)}(\,\xbar{x}(k),k),~\forall k\geq0\\
x(k) = \phi(k), ~\forall k \in \left[-d_M,0 \right] 
          \end{cases}
\end{equation}
considering, for $j\in \{1,2\}$, the subsystems $f_j:\reals^{n\times (d_M+1)} \times \naturals \to \reals^n$ defined by
\begin{equation}\label{eq:Defnsubsystemf}
\begin{aligned}
f_j(\,\xbar{x}(k),k):=&\A x(k) + \A_{m_j} x(k-d_{m_j})\\&+ \A_{M_j} x(k-d_{M_j}) + \Ad x(k-d(k)),
\end{aligned}
\end{equation}
where 
\begin{equation}\label{eqswitchdefinitions}
\begin{array}{l|l|l|l}
   \Amone=0  &  \AMone=\An & \dmone=d_m &\dMone=d_n\\
  \addlinespace[1pt] \hline \addlinespace[1pt]
  \Amtwo=\An   & \AMtwo=0& \dmtwo=d_n& \dMtwo=d_M 
\end{array}
\end{equation}
The \emph{switching rule} $\sigma:\naturals\to \{1,2\}$ is defined by
\begin{equation}\label{sigmafunc}
    \sigma(k) =\begin{cases}
    1,\;\;\;\; \text{ if } d_m \leq d(k) \leq d_n, \\
    2,\;\;\;\;  \text{ if } d_n< d(k) \leq d_M.
    \end{cases}
\end{equation}
  With this convention, the time-varying delay $d(k)$ is  such that $\dms \leq d(k) \leq \dMs$, for any $k\in \naturals$.
That means that the delay lower bound (noted $\dms$) and the delay upper bound (noted $\dMs$) depend on the value of $\sigma(k)$ at each sampling time. The main advantage of this representation is that we get rid of the constant delay $d_n$, which is now seen as an upper/lower bound (depending on the active subsystems) for the time-varying delay $d(k)$. This allows us to restore in this setting the manipulation techniques described in Section~\ref{sec:main} for the construction of Lyapunov-Krasovskii functionals even if the resulting system \eqref{clsystemswitch} is, at this stage, a switched system. However, this is not trivial since, as will be shown in the proof of Theorem \ref{thm:stab}, a new LKF modified from \eqref{LKFFunc} along with new manipulations are proposed. Achieving convex conditions for the stability analysis of the equivalent switched system with arbitrary delay $d(k)$ defined in \eqref{clsystemswitch}-\eqref{sigmafunc} is, thus, our first contribution.

\subsection{Stability Analysis}
\textcolor{black}{
The search for a \emph{common} Lyapunov-Krasovskii functional for the two delay subsystems defined by 
$f_1$ and $f_2$ can be computationally hard, leading to conservative or even structurally  infeasible conditions.
We propose a less conservative construction based on \emph{multiple} Lyapunov-Krasovskii functionals, see~\cite{Bra98} for an overview in the delay-free case. In particular we adapt, in this context, the ideas introduced in~\cite{PhiEss16,AhmJun14}, proposing a graph-based structure for the multiple Lyapunov-Krasovskii functionals, formally introduced in the following statement and graphically represented in Fig.~\ref{fig:pcg_n_0}.}
\begin{figure}[t!]
  \centering
\begin{tikzpicture}%
  [>=stealth,
   shorten >=4pt,
   node distance=3.5cm,
   on grid,
   auto,
   every state/.style={draw=blue!70, fill=blue!10, thick}
  ]
\node[state,inner sep=1pt, minimum size=20pt] (left)                  {{ $V_1$}};
\node[state,inner sep=1pt, minimum size=20pt] (right) [right=of left] {{$V_2$}};
\path[->]
%   FROM       BEND/LOOP           POSITION OF LABEL   LABEL   TO
   (left) edge[bend left=20]     node          [scale=0.9]            {$f_2$} (right)
        (right)   edge[bend left=20] node        [scale=0.9]        {$f_1$} (left)
   (left) edge[loop left=40]     node            [scale=0.9]          {$f_1$} (left)
   (right) edge[loop right=40]     node      [scale=0.9]                {$f_2$} (right)
   ;
\end{tikzpicture}
\caption{Path-complete representation of inequalities in~\eqref{eq:EdgesInequality} in Lemma~\ref{lem:pathcompletecriteria}.}
  \label{fig:pcg_n_0}
\end{figure}
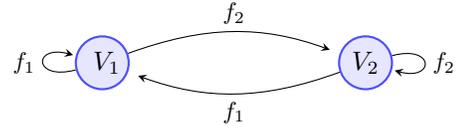
\begin{lem}\label{lem:pathcompletecriteria}
Suppose there exist positive definite $\V_j: \reals^{n\times(d_M+1)} \to \reals$, $j \in \{1,2\}$, such that the following conditions are satisfied, for any $\xbar{x}(k)\neq 0$, 
\begin{subequations}\label{eq:EdgesInequality}
\begin{align}\label{EdgeIneq1}
    \V_1(k+1)&< \V_1(k),\;\;\text{with }~ x(k+1) = f_1(\xbar{x}(k),k),
\\\label{EdgeIneq2}
     \V_2(k+1)&< \V_2(k),\;\;\text{with }~ x(k+1) = f_2(\xbar{x}(k),k),
\\\label{EdgeIneq3}
    \V_1(k+1)&< \V_2(k),\;\;\text{with }~ x(k+1) = f_1(\xbar{x}(k),k),\\
    \label{EdgeIneq4}
    \V_2(k+1)&< \V_1(k),\;\;\text{with }~ x(k+1) = f_2(\xbar{x}(k),k),
\end{align}
\end{subequations}
using the notation introduced in~\eqref{eq:LKfunctional} and $f_j$ defined by~\eqref{eq:Defnsubsystemf} and~\eqref{eqswitchdefinitions}.
Then, system \eqref{clsystemswitch} is asymptotically stable  for all time-varying delays $d:\naturals \to [d_m,d_M]$ and $\min_{j\in \{1,2\}}\{V_j(\xbar{x})\}$ is a LKF for \eqref{clsystemswitch}. 
\end{lem}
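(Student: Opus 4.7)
The plan is to show that the pointwise minimum $W(\xbar{x}):=\min\{V_1(\xbar{x}),V_2(\xbar{x})\}$ is a Lyapunov-Krasovskii functional for the switched system~\eqref{clsystemswitch}, independently of the delay signal $d(\cdot)$ and the resulting switching signal $\sigma(\cdot)$ defined in~\eqref{sigmafunc}. Positive definiteness of $W$ is inherited directly from the positive definiteness of $V_1$ and $V_2$, so the real content lies in proving strict decrease along every trajectory.

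Fix an arbitrary trajectory of~\eqref{clsystemswitch}, let $k\in \naturals$, and suppose first that $\sigma(k)=1$, so that $x(k+1)=f_1(\xbar{x}(k),k)$. The key observation is that the four inequalities~\eqref{eq:EdgesInequality} have been arranged precisely so that, under the application of $f_1$, \emph{both} $V_1(k)$ and $V_2(k)$ dominate the same quantity $V_1(k+1)$: inequality~\eqref{EdgeIneq1} gives $V_1(k+1)<V_1(k)$, and inequality~\eqref{EdgeIneq3} gives $V_1(k+1)<V_2(k)$. Taking the minimum of these two upper bounds yields $V_1(k+1)<\min\{V_1(k),V_2(k)\}=W(k)$, and therefore $W(k+1)\leq V_1(k+1)<W(k)$. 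The case $\sigma(k)=2$ is handled symmetrically, using~\eqref{EdgeIneq2} and~\eqref{EdgeIneq4} to conclude $V_2(k+1)<W(k)$ and hence $W(k+1)\leq V_2(k+1)<W(k)$. Combining both cases shows that $W(k+1)<W(k)$ along trajectories of~\eqref{clsystemswitch} for every admissible switching signal, i.e., $W$ is a bona fide LKF.

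From here the asymptotic stability conclusion follows via the standard Krasovskii argument: the sequence $W(k)$ is strictly decreasing and bounded below by $0$, hence convergent; combined with the positive definiteness (and continuity) of $W$ on the finite-dimensional extended state $\xbar{x}(k)\in\reals^{n(d_M+1)}$, this forces $\xbar{x}(k)\to 0$, and in particular $x(k)\to 0$, for every initial condition $\phi$ and every admissible delay signal.

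The only conceptually non-trivial point is recognizing \emph{why} the four inequalities~\eqref{eq:EdgesInequality} suffice: this is exactly the path-completeness property of the graph in Fig.~\ref{fig:pcg_n_0}, which guarantees that for every node $V_j$ and every letter $f_i\in\{f_1,f_2\}$ there is an outgoing edge from $V_j$ labeled $f_i$. Under the min construction, this guarantees that for whichever subsystem is active and whichever $V_j$ currently realizes the minimum, there is a contraction inequality connecting $W(k)$ to $W(k+1)$. Everything else is a routine check, so no technical obstacle is anticipated.
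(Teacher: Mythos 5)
Your proof is correct, and it is in fact more self-contained than the paper's own argument, which is only a sketch deferring to the path-complete Lyapunov literature and to a geometric sublevel-set intuition: the paper tracks the value of the \emph{multiple} functional along the directed path of Fig.~\ref{fig:pcg_n_0} selected by $\sigma$, whereas you directly verify that the single functional $W=\min\{V_1,V_2\}$ decreases, which also substantiates the final claim of the lemma (that $\min_j V_j$ is an LKF) more explicitly than the paper does. Two remarks. First, your closing attribution of the min-argument to path-completeness alone is slightly imprecise: what your computation actually uses is the stronger structural fact that, for each label $f_j$, \emph{every} node has an outgoing edge labeled $f_j$ pointing to the \emph{same} destination node $V_j$ --- this is why $V_j(k+1)$ is simultaneously dominated by $V_1(k)$ (via~\eqref{EdgeIneq3} or~\eqref{EdgeIneq1}) and by $V_2(k)$ (via~\eqref{EdgeIneq4} or~\eqref{EdgeIneq2}), hence by $W(k)$. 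For a general path-complete graph the pointwise minimum of the node functionals need not be monotone along trajectories (the max, or a path-dependent selection, is the generic certificate), so your argument is tied to this particular ``common-destination'' graph; it happens to be exactly the graph at hand, so nothing breaks. Second, the passage from ``$W(k)$ is strictly decreasing and bounded below'' to $\xbar{x}(k)\to 0$ is not automatic for a merely positive definite continuous $W$: one needs a uniform contraction rate, obtained e.g.\ from homogeneity/quadraticity of the $V_j$ and compactness of the unit sphere in $\reals^{n\times(d_M+1)}$. The paper's sketch glosses over this in the same way, so this is not a gap specific to your proposal, but it is the one step you label ``routine'' that actually requires the quadratic structure imposed later in Theorem~\ref{thm:stab}.
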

\begin{proof}[Sketch of the Proof]
The proof follows common ideas from path-complete Lyapunov theory for delay-free switched systems, see for example~\cite{AhmJun14,PhiEss16} to which we refer for the details. We give in what follows the geometric intuition underlying any path-complete stability criteria. In Fig.~\ref{fig:pcg_n_0}, we depicted a graph-based representation of inequalities in~\eqref{eq:EdgesInequality}: a generic edge $(\V_i,\V_j,f_j)$, with $(i,j)\in \{1,2\}^2$ represents the inequality $
\V_j(k+1)<V_i(k)$, with $x(k+1) = f_j(\xbar{x}(k))$.
In terms of sublevel sets in the extended state space $\reals^{n\times(d_M+1)}$, this inequality implies that \emph{any} sublevel set of $\V_i$ is mapped, by the subsystem $f_j$, inside the corresponding sublevel set of $\V_j$. We then note that the graph in Fig.~\ref{fig:pcg_n_0} (corresponding to the inequalities in~\eqref{eq:EdgesInequality}) is \emph{path-complete}, i.e., for any finite sequence of $\{1,2\}$, there exists a directed path labeled by the considered sequence. This implies that, \emph{for any} time-varying delay signal $d:\naturals\to [d_m,d_M]$ (and thus, recalling~\eqref{sigmafunc}, for any signal $\sigma:\naturals\to \{1,2\}$) the value of the (multiple) Lyapunov-Krasovskii functional (following the ``path'' corresponding to $\sigma$) is strictly decreasing along the trajectories of~\eqref{clsystemswitch}, implying asymptotic stability.
\end{proof}
In the following statement we propose LMI-based conditions, leading to \emph{quadratic} Lyapunov-Krasovskii functionals $\V_1,\V_2$ satisfying the conditions~\eqref{eq:EdgesInequality}
 of Lemma~\ref{lem:pathcompletecriteria}, thus ensuring asymptotic stability of system~\eqref{clsystem}. For better readability of the conditions, auxiliary matrices are defined in Appendix~\ref{SecondAppendix}.

 %. \begin{thm}
% Theorem using the LKF \eqref{LKFFunc} to satisfy Lemma~\ref{lem:pathcompletecriteria} and therefore have a sufficient condition to stability of system \eqref{clsystemswitch} under Assumption \ref{assumption:timevayingdelay}.
% \end{thm}

\textcolor{black}{\begin{thm}\label{thm:stab}
Consider $j \in \{1,2\}$ and assume that there exist matrices ${\Pj} \in\mathbb{S}_{3n+n(2-j)}^{+}$, $\Qonej$, $\Qtwoj$, $\Qthree$, $\Zonej$, $\Ztwoj$, $\Zthree \in\mathbb{S}_{n}^{+}$, ${\Xm_{_j}}\in\reals^{2n\times2n}$ such that 
\begin{equation}\label{eqmain_teo}
{\Psi}_{z_{_j}} \succ 0,
\!\begin{aligned}
~~{\Gamma_{j}^{\perp}}^{\T}{\Phi}(\dmj,j){\Gamma_{j}^{\perp}} \prec 0, ~~{\Gamma_{j}^{\perp}}^{\T}{\Phi}(\dMj,j){\Gamma_{j}^{\perp}} \prec 0,
\end{aligned}
\end{equation}
\noindent hold with 
\begin{equation*}
    {\Gamma_{j}^{\perp}} =
  \begin{bmatrix} 
  \A & \Amj & \Ad & \hspace{-1cm} \AMj &  \hspace{-1cm} 0_{n \times 3n} & \hspace{-0.1cm} 0_{n(2-j) \times 2n} \\
 & & & \I_{7n+2n(2-j)} & &
   \end{bmatrix},
\end{equation*}
\begin{align*}
    &{\Phi}(d,j) = \Phi_1(d,j)+{\mathcal{Q}_{_j}}
   + \Phi_{3}(j), \\
   &\Phi_1(d,j)=\W_{2_j}^{\T}{\Pj}\W_{2_j}-\W_{1_j}^{\T}{\Pj}\W_{1_j} \\&~~~~~~~~~~~~+ \text{He}\left\{\W_j^{\T}(d){\Pp_j}\left(\W_{2_j}-\W_{1_j}\right)\right\}, \\
   &\Phi_3(j)=\W_{3_j}^{\T}(\dmj^2 {{\Zonej}}+\hdj^2 {{\Ztwoj}}+\mathcal{Z}_{3_j}) \W_{3_j}\\
   &~~~~~~-\W_{s_j}^{\T} \mathscr{Z}_{1_{j}}(\dmj)\W_{s_j}-\W_{\Psi_j}^{\T} {\Psi}_{z_{_j}}\W_{\Psi_j}-\W_z^{\T} \mathscr{Z}_{3_{j}} \W_z,
\end{align*}
\noindent where $\hdj=\dMj-\dmj$, $\mathcal{Z}_{3_j}=(2-j)d_{\Delta_M}^2\Zthree$, $d_{\Delta_M} = d_M-d_{{M_j}}$,
\begin{equation*}
    {\Psi}_{z_{_j}} = \begin{bmatrix}
    ~\mathscr{Z}_{2_{j}} & {\Xm_{_j}}~ \\
    ~\star & \mathscr{Z}_{2_{j}}~
    \end{bmatrix}, \mathscr{Z}_{2_{j}} = \text{diag}\left({\Ztwoj},3{\Ztwoj}\right),
\end{equation*}
\begin{equation*}
    \mathscr{Z}_{1_{j}}(\dmj) = \text{diag}\left({\Zonej},3\gamma(\dmj){\Zonej}\right),
\end{equation*}
% \begin{equation*}
%     \mathscr{Z}_{3_{j}} = (2-j) \text{diag}\left({\Zthree},3{\Zthree}\right),
% \end{equation*}
\begin{align*}
\mathscr{Z}_{3_{j}} \hspace{-0.1cm}= & (2-j) \text{diag}\left({\Zthree},3{\Zthree}\right),\\
    {\mathcal{Q}_{_j}} \hspace{-0.1cm}= &\text{diag}\left(0,{\Qonej}, {\Qtwoj}-{\Qonej},0,\R_j\right), \\
    \R_j \hspace{-0.1cm}= &\text{diag}\left( \Qthree \hspace{-0.1cm}(2-j)-{\Qtwoj}, 0_{\left( 3n+(2-j)2n \right) \times \left( 3n+(2-j)2n \right)} \right),
\end{align*}
at the same time that 
% \begin{align}\label{extra:ineq}
%     \W_{5_{1}}^\T \Pp_{_1} \W_{5_{1}} - \W_{5_{2}}^\T \Pp_{_2} \W_{5_{2}} + \mathcal{P}_{_1} - \mathcal{P}_{_2} \preceq 0  ,
% \end{align}
\begin{align}\label{extra:ineq1}
\begin{split}
\mathcal{L}^{\perp^{\T}}(l_1) \left[\textit{diag}(\Ss_1,0_n) - \textit{diag}(0_n,\Ss_2)\right] \mathcal{L}^{\perp}(l_1) \prec 0,\\l_1 = d_{m_1}, ..., d_{M_1},
\end{split} 
\end{align}
\begin{align}\label{extra:ineq2}
\begin{split}
    \mathcal{L}^{\perp^{\T}}(l_2) \left[\textit{diag}(\Ss_2,0_n) - \textit{diag}(0_n,\Ss_1)\right] \mathcal{L}^{\perp}(l_2) \prec 0,\\l_2 = d_{m_2}, ..., d_{M_2}, 
\end{split}
\end{align}
\noindent with $S_j = \W_{5_{j}}^\T \Pp_{_j} \W_{5_{j}} + \mathcal{P}_{_j}$ is also satisfied. Then, %the inequalities \eqref{eq:EdgesInequality} in Lemma~\ref{lem:pathcompletecriteria} are satisfied for some $\rho\in [0,1)$ with the LKF xx and 
system \eqref{clsystemswitch} is asymptotically stable for any time-varying delay $d:\naturals\to [d_m,d_M]$. 
\end{thm}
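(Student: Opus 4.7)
The plan is to invoke Lemma~\ref{lem:pathcompletecriteria}: it suffices to exhibit two positive-definite Lyapunov-Krasovskii functionals $V_1, V_2$ satisfying the four inequalities in~\eqref{eq:EdgesInequality}. I would take $V_2$ as the standard LKF of~\eqref{LKFFunc}-\eqref{eq:Lyapunovstrcture} tailored to subsystem $f_2$, with the pair $(d_m, d_M)$ replaced by $(\dmtwo, \dMtwo) = (d_n, d_M)$. For $V_1$, I would use the analogous LKF for $f_1$ with $(\dmone, \dMone) = (d_m, d_n)$, augmented by an extra $\Qthree$-weighted summation of $x^\T \Qthree x$ over the interval $[k-d_M, k-d_n-1]$ and a matching $\Zthree$-weighted double summation in the spirit of $V_c$ in~\eqref{eq:Lyapunovstrcture}. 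These extra blocks inflate the dimension of the central Lyapunov matrix of $V_1$ from $3n$ to $4n$, which is reflected by $\Pj \in \mathbb{S}^+_{3n+n(2-j)}$. The additional summations are not required for the stability of the $f_1$-subsystem on its own, but they make the comparison with $V_2$ feasible at switching instants.

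For the self-loop inequalities~\eqref{EdgeIneq1} and~\eqref{EdgeIneq2}, the strategy is to replicate, for each subsystem $f_j$, the manipulations in the proof of Lemma~\ref{lem:stab}: compute $\Delta V_j(k)$ along $f_j$, bound the $V_c$-type increments using the Wirtinger summation inequality together with the reciprocally convex lemma, and apply Finsler's Lemma with the null-space matrix $\Gamma_j^{\perp}$ encoding the dynamics $x(k+1) = f_j(\xbar{x}(k),k)$. The additional blocks $\mathcal{Z}_{3_j}$, $\R_j$ and $\W_z$ in~\eqref{eqmain_teo} account for the extra $\Qthree$- and $\Zthree$-summations in $V_1$; the factor $(2-j)$ makes those contributions vanish when $j=2$, recovering exactly the LMIs of Lemma~\ref{lem:stab}. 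Since $\Phi(d,j)$ is affine in $d$, it suffices to check the bound at the endpoints $\dmj$ and $\dMj$, which gives precisely the second group of conditions in~\eqref{eqmain_teo}.

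For the cross-edge inequalities~\eqref{EdgeIneq3} and~\eqref{EdgeIneq4}, the plan is to rewrite each $V_j$ as a single quadratic form $V_j(k) = \xbar{x}(k)^\T S_j \xbar{x}(k)$ with $S_j = \W_{5_{j}}^\T \Pj \W_{5_{j}} + \mathcal{P}_{_j}$, after lifting all $\W$-selector matrices to the common augmented state of length $d_M+1$. The difference $V_1(k+1) - V_2(k)$ along $f_1$ then becomes a quadratic form in an enlarged vector constrained by a linear relation encoding $f_1$ and parametrized by the instantaneous delay $l_1 = d(k) \in [\dmone, \dMone]$. The matrix $\mathcal{L}^{\perp}(l_1)$ is precisely the corresponding null-space basis, so~\eqref{extra:ineq1} is the Finsler-reduced statement $V_1(k+1) - V_2(k) < 0$ for every admissible $l_1$; a symmetric argument using~\eqref{extra:ineq2} yields $V_2(k+1) - V_1(k) < 0$ under $f_2$. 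Combined with the loop inequalities, this delivers all four conditions of Lemma~\ref{lem:pathcompletecriteria} and concludes asymptotic stability for any $d:\naturals \to [d_m,d_M]$.

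The main technical obstacle I anticipate is the bookkeeping of the mismatched dimensions of $V_1$ and $V_2$ when setting up the cross-edge quadratic forms: the augmented vector must simultaneously support $V_1$ at time $k+1$ and $V_2$ at time $k$ (or the reverse), which requires extending every $\W$-selector consistently so that the $S_j$-blocks and the kernel operators $\mathcal{L}^{\perp}(l)$ emerge in the exact form announced. A secondary subtlety is that the parameter $l$ enters $\mathcal{L}^{\perp}(l)$ in a generally non-affine way, which is why the cross-edge LMIs~\eqref{extra:ineq1}--\eqref{extra:ineq2} cannot be reduced to endpoint checks and must be verified at every integer $l$ in the respective admissible range.
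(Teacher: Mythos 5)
Your proposal is correct and follows essentially the same route as the paper's proof: the same augmented $V_1$ (extra $\Qthree$/$\Zthree$ summations over $[k-d_M,\,k-d_n-1]$ inflating $\Pp_1$ to $\mathbb{S}_{4n}^{+}$), the same Lemma~\ref{lem:stab}-style manipulations with endpoint evaluation for the self-loop inequalities, and the same quadratic rewriting $V_j=\xbar{x}^{\T}\Ss_j\xbar{x}$ with Finsler's Lemma applied at each integer delay value for the cross-edge inequalities. Your closing remarks on the dimension bookkeeping and on why the cross-edge conditions cannot be reduced to endpoint checks accurately identify the same technical points the paper handles.
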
}

\begin{proof}
In order to establish stability of \eqref{clsystemswitch} we need to find LKFs $V_1$ and $V_2$ that satisfy \eqref{eq:EdgesInequality} in Lemma~\ref{lem:pathcompletecriteria}.

\textcolor{black}{First of all, to verify conditions \eqref{EdgeIneq1} and~\eqref{EdgeIneq2} we can apply the result in  Lemma~\ref{lem:stab}, since  subsystems $x(k+1)=f_j(\xbar{x}(k))$ defined in~\eqref{eq:Defnsubsystemf} are in the form of~\eqref{systemswithupperbounds}, for each $j\in \{1,2\}$. Since the stability of the overall switched delay system \eqref{clsystemswitch} depends on the whole state history $\xbar{x}(k)$, we need to define a LKF $V_j:\reals^{n \times (d_M+1)} \rightarrow \reals ^+$ for each $j\in \{1,2\}$. For $j=2$, consider the LKF of the form
\begin{equation}\label{LKFFuncSwitched2}
     V_2(k) = V_{a_2}(k)+V_{b_2}(k)+V_{c_2}(k),
 \end{equation}
where $V_{a_2},V_{b_2}, V_{c_2}$ are defined as in~\eqref{eq:Lyapunovstrcture} for $j=2$, \emph{mutatis mutandis}. By defining the extended vector $\xi_2(k)$ where $\xi_j(k)$ has the same structure of $\xi(k)$ but written in function of the minimum and maximum delays of the second subsystem, i.e., $d_{m_2}$ and $d_{M_2}$, and applying the manipulations in Lemma~\ref{lem:stab}, we arrive at the conditions~\eqref{eqmain_teo} in Theorem \ref{thm:stab} for the case of $j=2$, therefore having a sufficient condition for \eqref{EdgeIneq2}.} 

\textcolor{black}{Next, we find a LMI sufficient condition to verify conditions~\eqref{EdgeIneq1}. Note that in this case, if we consider $V_1$ in the same format of $V$ i.e. depending on the minimum and maximum delays of the first subsystem $d_{m_1}$ and $d_{M_1}$, $V_1$ would map $\reals^{n \times (d_n+1)} \rightarrow \reals ^+$ instead of $\reals^{n \times (d_M+1)} \rightarrow \reals ^+$, preventing the possibility of having ``composite'' conditions involving $\V_1$ and $\V_2$, as required by Lemma~\ref{lem:pathcompletecriteria}. Therefore, we propose a modification to the structure of $V_1$; We thus define: 
\begin{equation}\label{LKFFuncSwitched1}
     V_1(k) := V_{a_1}(k)+V_{b_1}(k)+V_{c_1}(k),
 \end{equation}
 where $V_{a_1}$, $V_{b_1}$, and $V_{c_1}$ have a form similar to~\eqref{eq:Lyapunovstrcture} but with the changes described below.
 \begin{itemize}[leftmargin=*]
     \item $V_{a_1}$ is defined as $V_{a}(k) = w_1^{\T}(k)\Pp_1 w_1(k)$, with $\Pp_1 \in\mathbb{S}_{4n}^{+}$ and the vector
     \begin{equation*}
        \hspace{-0.3cm} w_1(k) \hspace{-0.05cm} := \hspace{-0.05cm} \begin{bmatrix} x^{\T}(k) & \hspace{-0.2cm}\sum\limits_{l=k-d_{m_1}}^{k-1} \hspace{-0cm}\hspace{-0.4cm}x^{\T}(l) & \hspace{-0.2cm}\sum\limits_{l=k-d_{M_1}}^{k-d_{m_1}-1} \hspace{-0.4cm}x^{\T}(l) & \hspace{-0.2cm}\sum\limits_{l=k-d_{M}}^{k-d_{M_1}-1} \hspace{-0.4cm}x^{\T}(l) \end{bmatrix}^{\T}\hspace{-0.05cm}.
     \end{equation*}
     \item $V_{b_1}$ has the form of $V_b$, but includes a third summation term $\sum_{l=k-d_M}^{k-d_{M_1}-1} x^{\T}(l)\Q_{3_1} x(l)$. 
     \item $V_{c_1}$ has the form of $V_c$, but also includes an additional summation term, given by $(d_M-d_{M_1}) \sum_{l=-d_M+1}^{-d_{M_1}} \sum_{i=k+l}^{k} \eta^{\T}(i)\Z_{3_1}\eta(i)$. 
 \end{itemize}
The idea behind the modifications in $V_1$ is to include summation terms between $d_{M_1}=d_n$ (which is the maximum delay for subsystem $1$) and the maximum delay of the overall switched system \eqref{clsystemswitch}  (which is given by $d_M$) where the relation $d_M  \geq d_n$ holds. Such modifications lead to additional matrices $\Q_{3_1}$ and $\Z_{3_1}$, required only for the mode $1$.}
We define extended vector $\xi_{1}(k)$ in the same fashion of $\xi(k)$, but written replacing the delay limits $d_m$ and $d_M$ by $d_{m_1}$ and $d_{M_1}$, respectively, and also by adding the terms $x(k-d_M)$ and $v_4 = \frac{1}{d_M-d_{M_1}+1}\sum_{l=k-d_M}^{k-d_{M_1}}x(l)$. Once again, by applying the manipulations studied in \cite{Seuret_2015} and reviewed in the proof of Lemma \ref{lem:stab}, a sufficient condition to ensure the satisfaction of \eqref{EdgeIneq1} is obtained as described by~\eqref{eqmain_teo}  with $j=1$ in Theorem \ref{thm:stab}. 
 To summarize, inequalities~\eqref{eqmain_teo} for $j\in \{1,2\}$, are sufficient conditions for the existence of positive definite LKFs $V_j$, $j \in \{1,2\}$, that satisfy~\eqref{EdgeIneq1} and~\eqref{EdgeIneq2} , i.e., the conditions which are graphically illustrated by the ``self-loops" in Fig.~\ref{fig:pcg_n_0}. It remains to show that inequalities~\eqref{EdgeIneq3} and~\eqref{EdgeIneq4} are also fulfilled by the conditions in Theorem \ref{thm:stab}.
 
 \textcolor{black}{For this end, first note that the functionals $V_j$, $j \in \{1,2\}$, given in \eqref{LKFFuncSwitched1} and \eqref{LKFFuncSwitched2} can be rewritten in the form $V_j(k)=\xbar{x}^{\T}(k) \left(\W_{5_{j}}^\T \Pp_{_j} \W_{5_{j}} + \mathcal{P}_{_j} \right)\xbar{x}^{\T}(k)$, with matrices $\W_{5_{j}}$ and $\mathcal{P}_{_j}$ given in Appendix~\ref{SecondAppendix}. Then, inequalities
 \begin{equation}\label{eq:CondForEdgeIneq3}
   \begin{split}
        \kappa^\T \left[\textit{diag}(\Ss_1,0_n) - \textit{diag}(0_n,\Ss_2)\right] \kappa < 0,
        \\\forall~\mathcal{L}(l_1) \kappa = 0, \kappa \neq 0,~l_1 = d_{m_1}, ..., d_{M_1}
   \end{split}
 \end{equation}
 \begin{equation}\label{eq:CondForEdgeIneq4}
     \begin{split}
        \kappa^\T \left[\textit{diag}(\Ss_2,0_n) - \textit{diag}(0_n,\Ss_1)\right] \kappa < 0,
        \\\forall~\mathcal{L}(l_2) \kappa = 0, \kappa \neq 0,~l_2 = d_{m_2}, ..., d_{M_2}
   \end{split}
 \end{equation}
  with $\kappa(k) = \begin{bmatrix}
 x^\T (k+1) & \xbar{x}^{\T}(k)
 \end{bmatrix}^{\T}$, imply conditions~\eqref{EdgeIneq3} and~\eqref{EdgeIneq4}, respectively.  We note that the dynamics $x(k+1)=f_j(\xbar{x}(k))$, for any $d_{m_j} \leq d(k) \leq d_{M_j}$, are expressed by the restriction $\mathcal{L}(l_j) \kappa = 0, \kappa \neq 0,~l_j = d_{m_j}, ..., d_{M_j}$. Then, by applying form~\ref{lemma:Finsler:item3} of Lemma~\ref{lemma:Finsler} conditions~\eqref{extra:ineq1}-\eqref{extra:ineq2} of Theorem \ref{thm:stab} are obtained, completing the proof.}
 \end{proof}

\subsection{Comparison with alternative existing approaches}

An alternative modeling strategy for system~\eqref{clsystem} is to consider a corresponding (delay-free) \emph{linear switched system} in dimension $n\times (d_M+1)$ with $d_M-d_m$ modes, considering the augmented state $\xbar{x}(k)\in \reals^{n\times (d_M+1)}$ defined in~\eqref{eq:LKfunctional}. This idea is explored, in similar settings (but without the term depending on the constant delay $d_n$), among others, in \cite{HETEL2008,8793178,Vittorio2020}. Classical techniques from switched systems literature can then be used to establish stability of~\eqref{clsystem}.
 However, some disadvantages would appear with respect to the modeling technique described in this section:
 \begin{itemize}[leftmargin=*]
 \item For large delays, the system would be of high order, possibly leading to numerical issues related to the curse of dimensionality and the sparsity of the arising matrices.
 \item The manipulation for obtaining LMIs for the design of \emph{controller and observer gains} (presented in Section~\ref{sec:observerdesign}) would become involved or even infeasible, due to the presence of the additional decision variables arising from the control and observer gains, as detailed in what follows.
 \end{itemize}
This latter drawback is particularly important since the main motivation in studying systems of the form~\eqref{clsystem} arises from the control design problem for a class of systems with output delay, which is the main goal of the next section.

\section{Application to observer-controller design} \label{sec:observerdesign}

In this section, with the aid of form (ii) of Lemma~\ref{lemma:Finsler}, we extend the conditions developed in the previous section to \emph{co-design}, by means of LMIs, an observer and a controller  for the stabilization of systems with output time-varying delays. 

\subsection{Plant and controller description}
Consider the plant described by the following equations
\begin{align}
    \begin{cases}\label{eq:Plant}
      x_{p}(k+1)={\A}_p x_{p}(k)+{\B}_p  u(k) \\
      y(k)= x_{p}(k-d(k))\\
    \end{cases} 
    \end{align}
\noindent where $x_{p}(k) \in \reals ^{n_p}$ is the plant state vector, $y(k) \in \reals^{n_p}$ is the delayed measured output and $u(k) \in \reals^{m}$ is the control input. Matrices $\A_p$, $\B_p$ are constant, known, and of appropriate dimensions, and the pair ($\A_p$, $\B_p$) is controllable. The plant output delay is bounded and time-varying, satisfying $d_m \leq d(k) \leq d_M$. Furthermore, integers $d_m$ and $d_M$ are known, whereas the value of $d(k)$ at each sampling time is unknown. 
To control system \eqref{eq:Plant} we consider the following Luenberger-type observer plus control pair
\begin{align}
    \begin{cases}\label{eq:control}
      \xp_{p}(k+1)={\A}_p \xp_{p}(k)+{\B}_p  u(k) + \Lm e_y(k)\\
      u(k)= \K \xp_{p}(k)+\F e_y(k)\\
    \end{cases} 
    \end{align}
\noindent where $e_y(k) = y(k)-\xp_{p}(k-d_{n})$, $d_n$ is the constant nominal delay for the observer, the term $\Lm$ is the classical observer corrector term, and $\K$ is related to state feedback control of the observed state. The matrix $\F$, which filters the observer error in the control law, is an important extra degree of freedom to stabilize system~\eqref{eq:Plant} and can be viewed as a static version of the robustness filter in the Filtered Smith Predictor (FSP) strategy \cite{Normeyrico_2007}. Since the time-varying  delay $d(k)$ is unknown, the constant delay $d_n\in [d_m,d_M]$ represents, in this setting, a ``guess'' for $d(k)$ the designer provides to the observer-based controller. In the uncertain delay case, such a constant delay is traditionally chosen as the mean between maximum and minimum delays in predictive/observer-based delay compensation strategies. However, such a choice can be further explored and provides a new significant degree of freedom, as will be explored in the numerical examples.

%Without any further assumption on $d(k)$, it is thus reasonable to fix $d_n \coloneqq \ceil*{\frac{d_m+d_M}{2}}$, but we want to stress that our analysis carries out for any $d_n\in [d_m, d_M]$. }

% \begin{remark}\label{remark:cp}
% In this context, we assume access to the delayed measurement of all states, that means an output matrix $\C_p=\I_{n_p}$. The delay could either be due to sensor dynamics or a network transmission, for example. The developed method could be, however, adapted for the case of $y(k) \in \reals^{q},q<n_p$, with an output matrix of the form $\C_p=\begin{bmatrix}
% \I_{q} & 0_{q \times (n_p-q)}\end{bmatrix}$.\hfill $\triangle$
% \end{remark}

\subsection{Closed-loop system and problem formulation}
Consider the error signal defined by
\begin{equation}\label{eq:error}
    e(k)=x_{p}(k)-\xp_{p}(k).
\end{equation}
\noindent By taking into account \eqref{eq:Plant}, \eqref{eq:control}, and by defining the extended vector $x(k) = \begin{bmatrix} x_{p}(k)^{\T} & e(k)^{\T} \end{bmatrix}^{\T} \in \reals^{n}$, $n=2n_p$, the closed-loop system can be written as \eqref{clsystem} with 
\begin{align}
          \begin{array}{l}
 \A=\begin{bmatrix} 
\A_p+\B_p\K & -\B_p\K \\
0 & \A_p
\end{bmatrix}, \Ad=\begin{bmatrix} 
\B_p\F & 0 \\
-\Lm & 0 
\end{bmatrix},\\
\An=\begin{bmatrix} 
-\B_p\F & \B_p\F \\
\Lm & -\Lm 
\end{bmatrix}.
\end{array}
\label{eq:matrices}
\end{align}
\noindent  The problem we study is summarized in what follows.
\begin{problem}
Given the plant matrices $\A_p$, $\B_p$, and the time-varying delay limits $d_m$, $d_M$, develop convex conditions in the form of LMIs for the design of matrices $\K$, $\Lm$ and $\F$, such that the asymptotic stability of the closed-loop system \eqref{clsystem}-\eqref{eq:matrices} is ensured for any time-varying delay $d_m \leq d(k) \leq d_M$. 
%satisfying Assumption~\ref{assumption:timevayingdelay}.
\label{prob1}
\end{problem}
% \textcolor{red}{Mostrar que no caso nominal, i.e., d(k)=dn, $\forall k$, e -> 0 and $\hat{x}_p -> x_p$, eliminando o delay da malha-fechada com o observador para sistemas com atraso do sensor. O delay variante pode ser visto como uma pertubação.}

\subsection{Observer-based controller design}
The following corollary provides a solution to Problem \ref{prob1} \textcolor{black}{by leveraging the stability conditions developed in Section~\ref{sec:witchform}}.
\textcolor{black}{\begin{cor} \label{cor:stabilization}
Given scalar $\varepsilon\in[0,-1)$,  assume that, for any $j \in \{1,2\}$, there exist matrices ${\Pjb} $ in $\mathbb{S}_{3n+(2-j)n}^{+}$, $\Qonejb$, $\Qtwojb$, $\Qthreeb$, $\Zonejb$, $\Ztwojb$, $\Zthreeb$ in $\mathbb{S}_{n}^{+}$, ${\xbar{\Xm}_{_j}}$ in $\reals^{2n\times2n}$, $\J=$\textit{diag}$(\U,\U)$ in $\reals^{n\times n}$, $\U$ in $\reals^{n_p\times n_p}$, $\xbar{\K}$, $\xbar{\F}$ in $\reals^{m\times n_p}$, and $\xbar{\Lm}$ in $\reals^{n_p\times n_p}$ such that 
\begin{equation} \label{main_matineq}
    \xbar{\Psi}_{z_{j}} \succ 0, \xbar{\Phi}(\dmj,j)+
    \xbar{\Upsilon}_j \prec 0, \xbar{\Phi}(\dMj,j)+\xbar{\Upsilon}_j \prec 0
\end{equation}
\noindent where $\xbar{\Phi}(d,j)$ has the same format of ${\Phi}(d,j)$ in Theorem~\ref{thm:stab} but with the ``bar" matrices and where 
\begin{equation*}
    \xbar{\Upsilon}_j = \text{diag} \left(\xbar{\Upsilon}_{a_j}, 0_{\left( 3n+(2-j)2n \right) \times \left( 3n+(2-j)2n \right)} \right), 
\end{equation*}
\begin{equation*}
  \xbar{\Upsilon}_{a_j}  =
\left[
\setlength\arraycolsep{2pt}
\begin{array}{cccccc}
 -\J-\J^{\T} & \xbar{\A}-\varepsilon\J & \Amjbar & \xbar{\Ad} & \AMjbar  \\
    \star & \varepsilon\xbar{\A}+\varepsilon \xbar{\A}^{\T}\hspace{-0.1cm}& \varepsilon\Amjbar  & \varepsilon\xbar{\Ad}  & \varepsilon \AMjbar   \\ \star & \star & 0 & 0 & 0  \\
    \star & \star & \star & 0 & 0  \\
    \star & \star & \star & \star & 0 
\end{array}
\right],
\end{equation*}
\noindent with
\begin{align*}
&\xbar{\A}=\begin{bmatrix} 
\A_p\U^{\T}+\B_p\xbar{\K} & -\B_p\xbar{\K} \\
0 & \A_p\U^{\T}
\end{bmatrix}, ~\xbar{\Ad}=\begin{bmatrix} 
\B_p\xbar{\F} & 0 \\
-\xbar{\Lm} & 0 
\end{bmatrix},
\end{align*}
\begin{equation*}
    \begin{cases}
    \Amjbarone = 0, ~\Amjbartwo = \xbar{\An} \\
    \AMjbarone = \xbar{\An}, ~\AMjbartwo = 0
    \end{cases}, ~~~ \xbar{\An}=\begin{bmatrix}
-\B_p\xbar{\F} & \B_p\xbar{\F} \\
\xbar{\Lm} & -\xbar{\Lm}
\end{bmatrix},
\end{equation*}
\noindent hold at the same that the following inequalities are also satisfied
\begin{align}\label{eqcor:extra:ineq1}
\begin{split}
\left[\textit{diag}(\xbar{\Ss}_1,0_n) - \textit{diag}(0_n,\xbar{\Ss}_2)\right] +\text{He}\{\mathcal{I}^\T \xbar{\mathcal{L}}(l_1)\} \prec 0,\\l_1 = d_{m_1}, ..., d_{M_1},
\end{split} 
\end{align}
\begin{align}\label{eqcor:extra:ineq2}
\begin{split}
\left[\textit{diag}(\xbar{\Ss}_2,0_n) - \textit{diag}(0_n,\xbar{\Ss}_1)\right] +\text{He}\{\mathcal{I}^\T \xbar{\mathcal{L}}(l_2)\} \prec 0,\\l_2 = d_{m_2}, ..., d_{M_2},
\end{split} 
\end{align}
where $\mathcal{I}=\begin{bmatrix}
\I & \varepsilon \I & 0 & \dotsb & 0
\end{bmatrix}$ and
    \begin{align*}
    \xbar{\mathcal{L}}(l) =& \begin{bmatrix}
    -\J & \xbar{\A} & 0 & \dotsb & 0 & \xbar{\An} & 0 & \dotsb &  0
    \end{bmatrix}\\ &+ 
    \xbar{\Ad} \begin{bmatrix} 0 & 0 & \delta(1) \I & \dotsb & \delta(d_M) \I
    \end{bmatrix},~l \in [1,d_M],\\ &\text{with } \delta(i) = 1 \text{ if } i=l, \text{ and } \delta(i) = 0 \text{ if } i \neq l.
\end{align*}
Then, matrices $\K=\xbar{\K}\U^{-T}$, $\F=\xbar{\F}\U^{-T}$, $\Lm=\xbar{\Lm}\U^{-T}$, are such that the closed loop yielded by the connection between the plant \eqref{eq:Plant} and the observer-controller \eqref{eq:control}, i.e., system ~\eqref{clsystem} with matrices given as in~\eqref{eq:matrices}, is asymptotically stable for any time-varying delay $d:\naturals\to [d_m,d_M]$. 
\end{cor}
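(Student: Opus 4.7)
The plan is to apply Theorem~\ref{thm:stab} to the closed-loop system $\eqref{clsystem}$-$\eqref{eq:matrices}$, treating the gains $\K, \F, \Lm$ as decision variables alongside the Lyapunov matrices. This renders the conditions of Theorem~\ref{thm:stab} bilinear in the unknowns, and the linearization will rely on the slack-variable form \ref{lemma:Finsler:item2} of Finsler's Lemma---precisely the reason Finsler was already invoked in the analysis of Section~\ref{sec:witchform}. By form \ref{lemma:Finsler:item2}, the conditions ${\Gamma_j^{\perp}}^\T \Phi(d,j) {\Gamma_j^{\perp}} \prec 0$ of Theorem~\ref{thm:stab} and the cross-inequalities $\eqref{extra:ineq1}$-$\eqref{extra:ineq2}$ are equivalent to the existence of a multiplier $\Jf$ such that $\Phi(d,j) + \text{He}\{\Jf \Gamma_j\} \prec 0$ and the analogous statements for $\mathcal{L}(l)$ hold, where $\Gamma_j$ is the matrix annihilating $\Gamma_j^\perp$ and encoding the dynamics $x(k+1) = f_j(\xbar{x}(k))$.

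The core step is to restrict the multiplier: set $\Jf^\T = [\J^\T,\,\varepsilon \J^\T,\,0,\dots,0]$ with the block-diagonal structure $\J = \textit{diag}(\U, \U)$ as in the corollary, and analogously use $\mathcal{I}^\T = [\I,\varepsilon \I, 0,\dots,0]$ for the cross inequalities. Then perform the change of variables $\bar{\K} = \K \U^\T$, $\bar{\F} = \F \U^\T$, $\bar{\Lm} = \Lm \U^\T$. The block-diagonal structure of $\J$ is tailored to the block-triangular form of $\A, \An, \Ad$ in $\eqref{eq:matrices}$, so that each product of $\J$ with these matrices becomes affine in $\U, \bar{\K}, \bar{\F}, \bar{\Lm}$, collapsing to the barred matrices $\bar{\A}, \bar{\An}, \bar{\Ad}$. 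A congruence transformation by a block-diagonal extension of $\J^\T$ then turns the Lyapunov matrices $\Pj$, $\Qonej$, $\dots$, $\Xm_{_j}$ into the barred versions and produces exactly $\bar{\Phi}(d,j) + \bar{\Upsilon}_j \prec 0$, along with $\eqref{eqcor:extra:ineq1}$-$\eqref{eqcor:extra:ineq2}$, where $\bar{\mathcal{L}}(l)$ is built from $\mathcal{L}(l)$ by the same substitutions.

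To recover the physical gains, invert the change of variables: $\K = \bar{\K} \U^{-\T}$, and analogously for $\F, \Lm$. The inversion is legitimate because the $(1,1)$-block of $\bar{\Upsilon}_{a_j}$ equals $-\J - \J^\T$, which is negative definite by the LMI and hence forces $\U$ to be nonsingular; Theorem~\ref{thm:stab} then yields asymptotic stability of the closed loop. The main technical obstacle will be the detailed bookkeeping needed to check that the block-triangular form of $\A, \An, \Ad$ interacts correctly with $\J = \textit{diag}(\U, \U)$, so that no residual bilinear terms survive in $\bar{\Phi}(d,j) + \bar{\Upsilon}_j$ after the change of variables. This structural compatibility is exactly what forces $\J$ to be block-diagonal rather than a generic full matrix, and it is also the reason $\varepsilon$ must be a fixed scalar parameter---as a free decision variable it would re-introduce the bilinearities $\varepsilon \J$ and $\varepsilon \bar{\A}$ that the whole construction was designed to avoid.
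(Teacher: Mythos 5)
Your proposal follows essentially the same route as the paper's proof: Finsler's Lemma in its slack-variable form with a structured multiplier built from $\J=\textit{diag}(\U,\U)$ and the fixed scalar $\varepsilon$, followed by a congruence transformation and the changes of variables $\xbar{\K}=\K\U^{\T}$, $\xbar{\F}=\F\U^{\T}$, $\xbar{\Lm}=\Lm\U^{\T}$ (plus the barred Lyapunov matrices), with the cross-conditions \eqref{eqcor:extra:ineq1}--\eqref{eqcor:extra:ineq2} obtained in the same way, and your remark that $-\J-\J^{\T}\prec 0$ forces invertibility of $\U$ is a correct (implicit in the paper) justification for recovering the gains. The one detail to fix is the multiplier itself: the paper takes $\Jf=\begin{bmatrix}\J^{-\T}&\varepsilon\J^{-\T}&0&\cdots&0\end{bmatrix}^{\T}$ precisely so that the subsequent congruence by $\textit{diag}(\J,\dots,\J)$ normalizes it to $\begin{bmatrix}\I&\varepsilon\I&0&\cdots&0\end{bmatrix}^{\T}$ and yields the blocks $-\J-\J^{\T}$ and $\A\J^{\T}=\xbar{\A}$; with your choice $\Jf^{\T}=\begin{bmatrix}\J^{\T}&\varepsilon\J^{\T}&0&\cdots&0\end{bmatrix}$ the same congruence would leave cubic terms in $\J$ (and without the congruence the products $\J\A$ remain bilinear in $\U$ and $\K$), so the linearization as literally written would not close.
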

}
\begin{proof}

Consider the LKFs $V_j$ as in~\eqref{LKFFuncSwitched1}-\eqref{LKFFuncSwitched2}, the extended vector $\xi_{_j}(k)$ and the matrix function ${\Phi}(d,j)$. The bound $\Delta V_j(k) \leq \xi_{_j}^{\T}(k){\Phi}(d(k),j)\xi_{_j}(k)$, for each $j \in \{1,2\}$ holds by previously commented arguments. 

Next, we apply Lemma~\ref{lemma:Finsler}. By noting that $\Gamma_j \xi_j(k) = 0$ with $\Gamma_j = \begin{bmatrix} -\I & \A & \Amj & \Ad & \AMj & \hspace{-0.1cm} 0 & \hspace{-0.1cm} \dotsb & \hspace{-0.1cm} 0 \end{bmatrix}$, the relation $\xi_j^{\T}(k){\Phi}(d(k),j)\xi_j(k) \prec 0$ is verified for all $\xi_j(k) \neq 0$ if and only if there exist matrices $\Jf_j$ such that 
\begin{equation}\label{eq:differentFinslermatrices}
    {\Phi}(d(k),j)+\Jf_j \Gamma_j+\Gamma_j^{\T} \Jf_j \prec 0, \text{ for } j \in \{1,2\}
\end{equation}
%${\Phi}(d(k),j)+\Jf_j \Gamma_j+\Gamma_j^{\T} \Jf_j \prec 0$ for $j \in \{1,2\}$. 
Since ${\Phi}(d(k),j)$ is affine with respect to $d(k)$, the last inequality is negative definite if and only if it is negative definite for both $d(k)=\dmj$ and $d(k)=\dMj$. 
A sufficient condition ensuring the fulfillment of \eqref{eq:differentFinslermatrices} can then be obtained by defining $\Jf_1 = \Jf_2 = \Jf=\begin{bmatrix}\J^{-\T} & \varepsilon\J^{-\T}  & 0 & \cdots & 0 \end{bmatrix}^{\T},$
% \begin{equation*}
%     \Jf_1 = \Jf_2 = \Jf=\begin{bmatrix}\J^{-\T} & \varepsilon\J^{-\T}  & 0 & \cdots & 0 \end{bmatrix}^{\T},
% \end{equation*}
where $\J^{-1}=\textit{diag}(\U^{-1},\U^{-1})$ and $\varepsilon$ is an auxiliary scalar that allows some degree of freedom for the conditions. Then, by left and right multiplication of ${\Phi}(\dmj,j)+\Jf\Gamma_j+\Gamma_j^{\T}\Jf^{\T}$ (also of ${\Phi}(\dMj,j)+\Jf\Gamma_j+\Gamma_j^{\T}\Jf^{\T}$) by $\textit{diag}(\J,\dotsb,\J)$ and its transpose, respectively, by left and right multiplication of $\Psi_{z_{_j}}$ by \textit{diag}$(\J,\J)$ and its transpose, and changes of variable
\begin{align*}
        &\xbar{\Pp_j}=\textit{diag}(\J,\dotsb,\J)~\Pp_j~ \textit{diag}(\J,\dotsb,\J)^{\T}, \\
            &\xbar{\Xm}_j=\textit{diag}(\J,\J)~\Xm_j~ \textit{diag} (\J,\J)^{\T},\\
        &\xbar{\K}= \K \U^{\T}, ~\xbar{\F}= \F \U^{\T}, ~\xbar{\Lm}= \Lm \U^{\T},\\
        &\{\Qonejb, \Qtwojb, \Qthreeb\} = \J \{\Qonej,\Qtwoj,\Qthree\}\J^{\T},\\
         &\{\Zonejb, \Ztwojb, \Zthreeb\} = \J \{\Zonej,\Ztwoj,\Zthree\}\J^{\T},
        \end{align*}
\noindent conditions \eqref{main_matineq} are obtained, which need to be verified for $j \in \{1,2\}$. Finally, conditions~\eqref{eqcor:extra:ineq1} and~\eqref{eqcor:extra:ineq2} are derived by applying form~\ref{lemma:Finsler:item3} of Lemma~\ref{lemma:Finsler} to \eqref{eq:CondForEdgeIneq3} and \eqref{eq:CondForEdgeIneq4}, using the multiplier $\begin{bmatrix}\J^{-\T} & \varepsilon \J^{-\T} & 0 & \cdots & 0 \end{bmatrix}^{\T}$ and applying changes of variables, completing the proof.
\end{proof}

% If the switching signal $\sigma$ was known, i.e., if it was possible to detect when the time-varying delay $d(k)$ was above (or equal to) or below the constant delay $d_n$, a switched observer-control strategy could be employed
% \begin{align}
%     \hspace{-0.2cm}\begin{cases}\label{eq:controlswitched}
%       \hspace{-0.05cm}\xp_{p}(k+1)\hspace{-0.05cm}= \hspace{-0.05cm}{\A}_p \xp_{p}(k)+{\B}_p  u(k) \\
%       ~~~~~~~~~+ \Lm_{\sigma(k)} \left[y(k)-\xp_{p}(k-d_{n_{\sigma(k)}})\right] \\
%       \hspace{-0.05cm}u(k)\hspace{-0.05cm}=\hspace{-0.05cm} \K_{\sigma(k)} \xp_{p}(k)\hspace{-0.05cm}+\hspace{-0.05cm}\F_{\sigma(k)} \hspace{-0.1cm}\left[y(k)\hspace{-0.05cm}-\hspace{-0.05cm}\xp_{p}(k-d_{n_{\sigma(k)}})\right]\\
%     \end{cases} 
%     \end{align}
%     In this case, design conditions could still be obtained by using $\Jf_1 \neq \Jf_2$, leading to different changes of variables for each sub-mode and consequently to less conservative inequalities. Nonetheless, assuming knowledge of $\sigma(k)$ is not reasonable in the context of the present work so we decided to present conditions for mode-independent stabilization. 

\subsection{Numerical example}\label{sec:simu}

Consider the NCS studied in \cite[Example 2]{HU_2007}.
% \begin{equation*}
%     \Dot{x}_p = \left[\begin{smallmatrix}
%     -0.8 & -0.01 \\
%     1 & 0.1
%     \end{smallmatrix}\right]x_p
%     + \left[\begin{smallmatrix}
%     0.4 \\ 0.1
%     \end{smallmatrix}\right]u
% \end{equation*}
By considering a sampling time of 0.5 seconds, an induced network time delay, we obtain the discrete-time model \eqref{eq:Plant} with $\A_p = \left[\begin{smallmatrix} 0.6693 & -0.0042 \\ 0.4231 & 1.0501 \end{smallmatrix}\right]$, $\B_p =\left[\begin{smallmatrix}
0.1647 \\ 0.0960
\end{smallmatrix}\right]$. In \cite{HU_2007}, the control law is given by $u(k)=-\left[\begin{smallmatrix} 1.2625 & 1.2679\end{smallmatrix}\right] x_{p}(k-d(k))$, which guarantees closed-loop stability for a maximum induced delay of 1 second (or two samples $d_M=2$), according to \cite[Theorem 4]{HU_2007}. \textcolor{black}{More recently, the compensator strategy from \cite{Lima_2021} was able to stabilize this system for a time-varying delay in the range $1 \leq d(k) \leq 7$.} By setting $\varepsilon=-0.995$ in Theorem~\ref{cor:stabilization}, $d_n=1$, we obtain matrices $\K=\left[\begin{smallmatrix} -0.1925  & -0.1702 \end{smallmatrix}\right]$, $\F=\left[\begin{smallmatrix} -0.1755 & -0.1601\end{smallmatrix}\right]$, $\Lm=\left[\begin{smallmatrix}-0.0032 & -0.0007 \\
    0.0578  & 0.0525
\end{smallmatrix}\right]$ that guarantee stability for any time-varying delay $d(k)$ such that $1 \leq d(k) \leq 17$. That means that stability is guaranteed even for a time-varying delay maximum of 8.5 seconds. That is a substantial increase compared to the results obtained by \cite{HU_2007} and \cite{Lima_2021}, and further shows the advantage of introducing the delay $d_n$ and the observer strategy instead of simply feedbacking the delayed output $x(k-d(k))$. To illustrate the effects of the LMI tuning parameter $\varepsilon$ and of the constant delay $d_n$, Fig. \ref{ex1:relationepsilonanddn} shows the maximum delay $d_M$ for which the conditions of Corollary~\ref{cor:stabilization} were feasible for different values of $\varepsilon$ and $d_n$ (the minimum delay was kept fixed at $d_m=1$). It is observed that as $\varepsilon$ approaches $-1$, the maximum delay $d_M$ tends to increase. For $d_n$, it seems that the best results are achieved near to the bounds on the delay.

\begin{figure}[httb!]
\center{\includegraphics[width=\linewidth,angle=0]{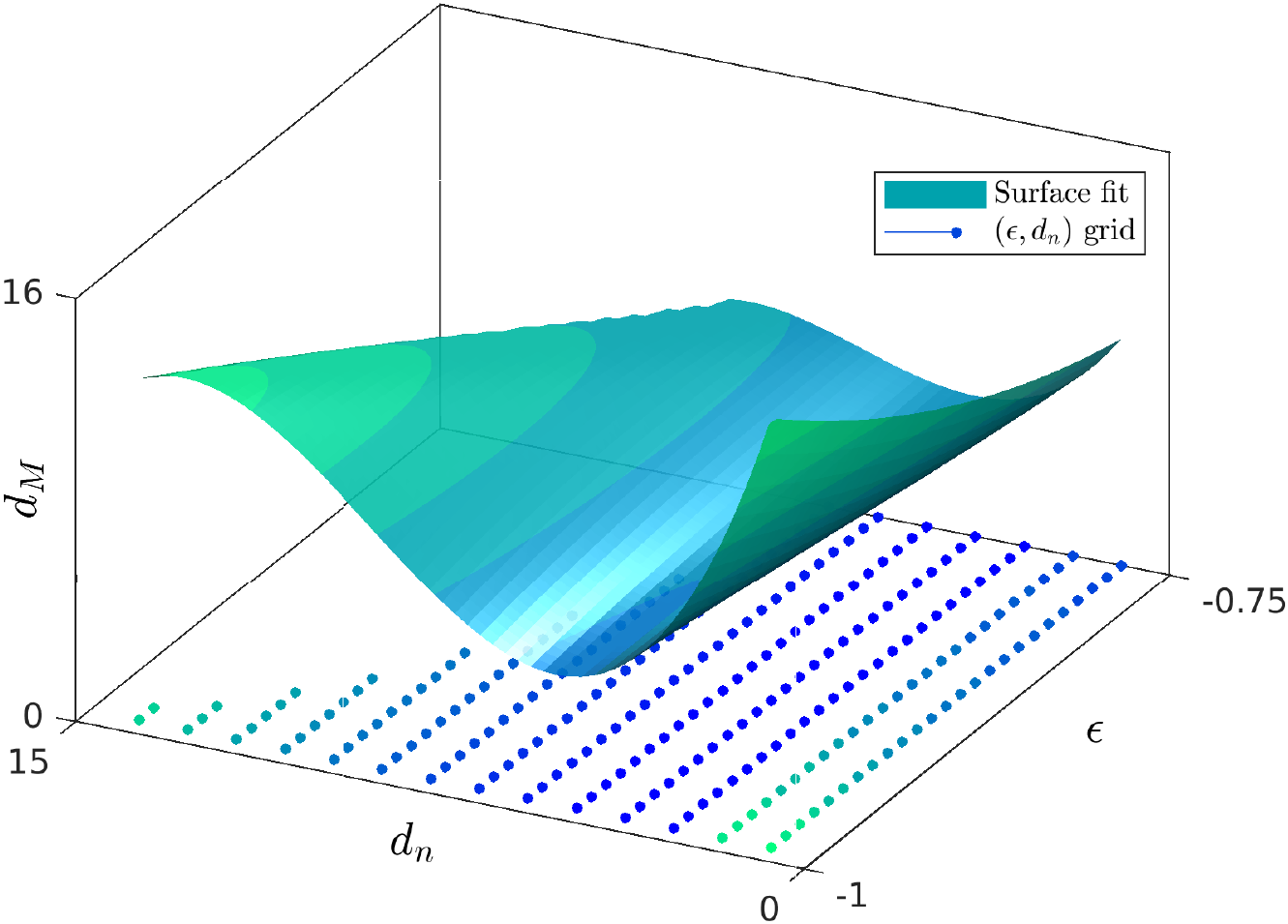}
\caption{Example 1: Relation between $\varepsilon$, $d_n$ and maximum delay $d_M$.}
\label{ex1:relationepsilonanddn}}
\end{figure}

\subsection{Discussion on observer and predictor strategies}

In the last years, the application of model-based strategies has been extensively investigated for the control of systems with delays \cite{Sanz_2019a,LI2014,Hao2017,Normeyrico_2012,ZHOU20122387,GARCIA2010367}. Recently, in \cite{Lima_2021}, analysis of a structure based on the Filtered Smith Predictor (FSP) for discrete-time systems with long output time-varying delays was studied in the presence of saturating inputs with a new methodology to characterize the regions of asymptotic stability for this type of system. In \cite{LHACHEMI2019}, the robustness of a constant-delay predictive control law for continuous-time linear systems in the case of an uncertain time-varying input delay has been accessed, while extension for a class of diagonal infinite-dimensional boundary control systems was also presented. Both \cite{Lima_2021} and \cite{LHACHEMI2019} evaluate the robustness of nominal delay predictors, that is, the predictor delay is constant and is taken between the lower and upper bounds of the plant time-varying delay in an attempt to minimize the predictor error as possible. Thus, \cite{Lima_2021} and \cite{LHACHEMI2019} have in common the fact that stabilization is not achieved by means of convex design conditions, i.e. the controller parameters are fixed and then stability analysis conditions in the form of linear matrix inequalities (LMIs) are applied.

On the other hand, the works in \cite{SanzFridman2020,SanzEmilia2018} are closer to achieving stabilization of model-based control structures for systems with uncertain time delay by means of LMI design conditions, where the use of predictive extended state observers are investigated. However, as noted in these papers, due to the difficult in linearizing the analysis conditions to the design case, a sequential approach is taken where a feedback gain $\K$ is obtained via a first condition and then other LMIs are used to obtain the parameters of the observer. Therefore, a \textit{full} design of the predictor-based controllers via LMIs is not obtained. 

In \cite{GONZALEZ2012}, sufficient conditions for the stabilization of discrete-time systems with input time-varying delays are expressed in form of LMIs. Nonetheless, a cone complementarity linearization (CCL) algorithm where the first step consists of finding a gain $\K$ that stabilizes the delay-free closed-loop is implemented and used within the numerical example section (see Section 3.1 of \cite{GONZALEZ2012}). In \cite{GONZALEZ2013}, a strategy based on the Artstein’s reduction method \cite{Artstein_1982,Manitius_1979} is used to rewrite the closed-loop system as a delay-free one interconnected to uncertainties, followed by the development of sufficient conditions to stabilize uncertain discrete-time systems with time-varying delays. Concerning observer-predictor structures, the work in \cite{Gonzalez2019} has achieved co-design through LMIs of an observer gain $\Lm$ and a feedback gain $\K$ to stabilize continuous-time linear systems with both input and output time-varying delays. Nonetheless, in order to solve a convex problem, the main design theorem in the mentioned paper requires user input of four auxiliary scalars and of a full matrix of dimension $p \times (n-p)$, where $n$ is the number of states and $p$ the number of outputs of the plant. 

Hopefully, the literature review has given the reader a sense of the difficulties in achieving stabilization of time-delayed systems with unknown and/or time-varying delays through model-based strategies using LMI-based design. In this paper, we proposed an LMI-based solution for the design of a nominal-delay observer-based control structure for discrete-time systems with time-varying delays by means of a switched-based strategy, which can be solved with user input of only one auxiliary scalar (denoted $\epsilon$ in the paper). Differently from most strategies, stabilization is achieved with the observer and control parameters being obtained in a single step by solving simple LMIs without the need to prefix any of the control gains neither use interactive approaches. This is one of the main contributions of this work. The main results are developed for the case of output delays. However, the observer strategy and the design methodology can also be extended to the cases of time-varying delays in the input and in the plant states.

\section{Conclusion}\label{sec:conclu}

In this work, a new technique for the stability analysis of systems with both a constant and a time-varying delay based on a specific switched representation has been studied in conjunction with a new modified LKF (see the proof of Theorem \ref{thm:stab}) that led to obtaining convex conditions in the form of LMIs. A stabilizing technique for discrete-time delay systems with sensor time-varying delays has been proposed as an extension. The obtained simulation results are promising, showing a substantial increase in the time-varying delay bounds for stabilization of an example from the literature. Further developments of the strategy can be envisaged to include, for example, the problem of control saturation and the study of stability conditions based on more elaborate (and possibly less conservative) path-complete criteria.

%In this work a stabilizing technique for discrete-time delay systems with sensor time-varying delays has been proposed. This method is based on the design of a new observer-based control law constructed using a nominal delay as an estimate of the unknown time-varying delay. The introduction of such a delay leads to a switched systems representation of the closed loop and the proposal of a multiple LKF structure. It has been shown that the obtained results, at least at simulation level, are promising, showing substantial increase in the time-varying delay bounds for stabilization of an example from the literature. Further developments of the strategy can be envisaged to include the problems, for example, of control saturation and of different sensor delays for each plant state. The study of stability conditions based on more elaborate (and possibly less conservative) path-complete criteria is also an object of ongoing work.

% \appendices

% Appendixes, if needed, appear before the acknowledgment.

% \section*{Acknowledgment}
% The authors would like to thank Matteo Della Rossa for the fruitful discussions on switched systems. 
\bibliographystyle{myIEEE}
\bibliography{refs}
\appendices 
\section{Auxiliary matrices for Lemma \ref{lem:stab}}\label{FirstAppendix}
\begin{equation*}
    \W_{\Psi} = \begin{bmatrix}
   0_{2n\times2n} &\hspace{-0.2cm} & \hspace{-0.2cm}\M \\ 0_{2n\times n} & \hspace{-0.2cm}\M & \hspace{-0.2cm}0_{2n\times n}
   \end{bmatrix}, \M = \begin{bmatrix}  0 & \hspace{-0.1cm} \I \hspace{-0.1cm} & \hspace{-0.1cm} -\I & \hspace{-0.1cm} 0 \hspace{-0.1cm} & \hspace{-0.1cm} 0 & \hspace{-0.1cm} 0 \\
   0 & \hspace{-0.1cm} \I & \hspace{-0.1cm} \I & \hspace{-0.1cm} 0 \hspace{-0.1cm} & \hspace{-0.1cm} 0 & \hspace{-0.1cm} -2\I \end{bmatrix},
\end{equation*}
\begin{equation*}
    \W_{s} = \begin{bmatrix} \M & 0_{2n\times2n} \end{bmatrix}, \W_{3} = \begin{bmatrix} \I &\hspace{-0.1cm} -\I & \hspace{-0.1cm} 0 &  \hspace{-0.1cm}0 & \hspace{-0.1cm} 0 & \hspace{-0.1cm} 0 & \hspace{-0.1cm} 0 & \hspace{-0.1cm} 0 \end{bmatrix},
\end{equation*}
\begin{equation*}
    \W_{1}(d_m,d_M) \hspace{-0.1cm}= \hspace{-0.1cm}\begin{bmatrix} 0 & \hspace{-0.2cm}\I & \hspace{-0.2cm}0 &\hspace{-0.2cm} 0 & 0  &  &\\
     0 & \hspace{-0.2cm}-\I & \hspace{-0.2cm}0 & \hspace{-0.2cm}0 & 0 &  \W_{4}(d_m,d_M) &  \\
     0 &\hspace{-0.2cm} 0 & \hspace{-0.2cm}-\I & \hspace{-0.2cm}-\I & 0 & & \end{bmatrix},
\end{equation*}
\begin{equation*}
    \W_{2}(d_m,d_M) \hspace{-0.1cm}= \hspace{-0.1cm}\begin{bmatrix} \I & \hspace{-0.2cm}0 & \hspace{-0.2cm}0 &\hspace{-0.2cm} 0 & 0 & &  \\
     0 & \hspace{-0.2cm}0 & \hspace{-0.2cm}-\I & \hspace{-0.2cm}0 & 0 & \W_{4}(d_m,d_M)  \\
     0 &\hspace{-0.2cm} 0 & \hspace{-0.2cm}0 & \hspace{-0.2cm}-\I & -\I & &  
     \end{bmatrix},
\end{equation*}
\begin{equation*}
    \W_{4}(d_m,d_M) \hspace{-0.1cm}= \hspace{-0.1cm}\begin{bmatrix}0 & \hspace{-0.7cm}0 &\hspace{-0.2cm} 0 \\
     (d_m +1)\I & \hspace{-0.7cm}0 & \hspace{-0.2cm}0 \\
      0 & \hspace{-0.7cm}(1-d_m)\I &\hspace{-0.2cm} (d_M+1)\I \end{bmatrix},
\end{equation*}
\begin{equation*}
 \W(d) = \begin{bmatrix}
    0_{2n \times 8n} \\
 0_{n \times 6n}~~d \I_n~~-d \I_n
    \end{bmatrix}.
\end{equation*}

\section{Auxiliary matrices for Theorem~\ref{thm:stab}}\label{SecondAppendix}
\begin{equation*}
    \W_{\Psi_j} = \begin{bmatrix}
   \W_{\Psi} & 0_{4n \times 2n(j-2)} \end{bmatrix}, \M_d = (d_{\Delta_M}+1)\I_{n(j-2)},
\end{equation*}
\begin{equation*}
    \W_z = \begin{bmatrix} 0_{n \times 4n} & \I &  0_{n \times 3n} & -\I & 0_{n \times n} \\
   0_{n \times 4n} & \I &  0_{n \times 3n} & \I & -2\I
   \end{bmatrix},
\end{equation*}
\begin{equation*}
    \W_{s_j} = \begin{bmatrix} \W_{s} & 0_{2n \times 2n(j-2)} \end{bmatrix}, \W_{3_j} = \begin{bmatrix} \W_{3} & 0_{n \times 2n(j-2)} \end{bmatrix},
\end{equation*}
\begin{equation*}
    \W_{1_j} \hspace{-0.1cm}= \hspace{-0.1cm}\begin{bmatrix} 
     \W_{1}(d_{m_j},d_{M_j})~~0_{3n \times 2n(j-2)}\\
     0_{n \times 4n(j-2)}~-\I_{n(j-2)}~0_{n \times 4n(j-2)}~\M_d \end{bmatrix},
\end{equation*}
\begin{equation*}
    \W_{2_j} \hspace{-0.1cm}= \hspace{-0.1cm}\begin{bmatrix} 
     \W_{2}(d_{m_j},d_{M_j})~~0_{3n \times 2n(j-2)}\\
     0_{n \times 8n(j-2)}~~-\I_{n(j-2)}~~\M_d \end{bmatrix},
\end{equation*}
\begin{equation*}
 \W_j(d) = \begin{bmatrix}
    0_{2n \times 8n}~~0_{2n \times 2n(2-j)} \\
 0_{n \times 6n}~~d \I_n~~-d \I_n~~0_{n \times 2n(2-j)} \\
    0_{n \times 10n(2-j)}
    \end{bmatrix}, 
\end{equation*}
\begin{align*}
    \mathcal{L}(l) =& \begin{bmatrix}
    -\I & \A & 0 & \dotsb & 0 & \A_n & 0 & \dotsb &  0
    \end{bmatrix}\\ &+ 
    \A_d \begin{bmatrix} 0 & 0 & \delta(1) \I & \dotsb & \delta(d_M) \I
    \end{bmatrix},~l \in [1,d_M],\\ &\text{with } \delta(i) = 1 \text{ if } i=l, \text{ and } \delta(i) = 0 \text{ if } i \neq l\\
 \mathcal{L}^\perp(l) =& 
    \begin{bmatrix}
    \mathcal{L}_a^{\perp^\T}(l) & \I_{d_M+1}
    \end{bmatrix}^\T,\\
    \mathcal{L}_a^\perp(l) =& \begin{bmatrix}
    \A & 0 & \dotsb & 0 & \A_n & 0  & \dotsb &  0
    \end{bmatrix}\\ &+ 
    \A_d \begin{bmatrix} 0 & \delta(1) \I & \dotsb & \delta(d_M) \I
    \end{bmatrix},~l \in [1,d_M],\\ &\text{with } \delta(i) = 1 \text{ if } i=l, \text{ and } \delta(i) = 0 \text{ if } i \neq l.
\end{align*}
\vspace{0.1cm}
% \left(\dMj-\dmj \right)
% \undermat{\dmj\vphantom{\dmj}}{\dotsb} 
\begin{align*}
        \W_{5_{j}} &= \begin{bmatrix}
    \I & \undermat{\dmj \times \vphantom{\dmj}}{0 & \dotsb & 0} & \undermat{~\left(\dMj - \dmj\right) \times \vphantom{\dmj}}{~~~0 & \dotsb & 0} & \undermat{~\left(d_M - \dMj\right) \times \vphantom{\dmj}}{~~~0 & \dotsb & 0}~ \\ 
    0 & \I & \dotsb & \I & ~~~0 & \dotsb & 0 &~~~ 0 & \dotsb & 0~ \\
    0 & 0 & \dotsb & 0 & ~~~ \I & \dotsb & \I & ~~~0 & \dotsb & 0 \\
    & & & & & \W_{a_{j}} & & & &
    \end{bmatrix}, \\
    \W_{a_{j}} &= 
    \begin{bmatrix}
     0_{(2-j)n\times d_{M_j}n} & \I_{n(2-j)} & \dotsb & \I_{n(2-j)} 
    \end{bmatrix},
\end{align*}
\vspace{0.1cm}
\begin{align*}
    \mathcal{P}_{_j} = \text{diag}\left({\mathcal{H}}_{_j}, \undermat{~\left(d_M - \dMj\right) \times}{\Qthree, \dotsb, \Qthree} \right)+(2-j)\mathcal{H}_a{_j},
\end{align*}
% \noindent where
\begin{align*}
{\mathcal{H}}_{_j} \hspace{-0.05cm}=\hspace{-0.05cm}
\begin{bmatrix}
\vspace{0.2cm}  \mathcal{P}_0 \hc \mathcal{P}_{b_{_1}} \hc 0 \hc \dotsb \hc 0 \hc \dotsb \hc 0 \\ \vspace{0.2cm}
  \star \hc \mathcal{P}_{a_{_1}}  \hc \ddots \hc \ddots \hc \vdots \hc \ddots \hc \vdots \\ \vspace{0.2cm}
  \star \hc \ddots \hc \ddots \hc \mathcal{P}_{_{b_{\dmj}}} \hc 0 \hc \dotsb \hc 0 \\ \vspace{0.2cm}
  \vdots \hc \ddots \hc \star \hc \mathcal{P}_{_{a_{\dmj}}} \hc \mathcal{P}_{_{d_{1}}} \hc \ddots \hc \vdots \\ \vspace{0.2cm}
  \star \hc \dotsb \hc \star \hc \star \hc \mathcal{P}_{c_{_1}} \hc \ddots \hc 0  \\ \vspace{0.2cm}
  \vdots \hc \ddots \hc \vdots \hc \ddots \hc \ddots \hc \ddots \hc \mathcal{P}_{d_{_{\hdj}}} \\ \vspace{0cm}
  \star \hc \dotsb \hc \star \hc \dotsb \hc \star \hc \star \hc \mathcal{P}_{c_{_{\hdj}}}
\end{bmatrix}
\end{align*}
% \noindent and
\begin{align*}
    &\mathcal{P}_0= {\Z}_{1_{j}} \dmj^{2}+{\Z}_{2_{j}}\hdj^2, \\
    &\mathcal{P}_{a_{i}}={\Q}_{1_{j}}+2{\Z}_{2_{j}}\hdj^2+{\Z}_{1_{j}}\dmj\left(2\dmj-2i+1\right), \\
    &\mathcal{P}_{b_{i}}=-{\Z}_{2_{j}}\hdj^2-{\Z}_{1_{j}}\dmj\left(\dmj-i+1\right), \\
    &\mathcal{P}_{c_{l}}={\Q}_{2_{j}}+{\Z}_{2_{j}}\hdj\left(2\hdj-2l+1\right), \\
    &\mathcal{P}_{d_{l}}=-{\Z}_{2_{j}}\hdj\left(\hdj-l+1\right),
\end{align*}
\noindent for $i \in [1,\dmj]$ and $l \in [1,\hdj]$. Moreover, $\mathcal{H}_a{_j}$ is a matrix of same format of $\mathcal{H}_{_j}$ but with the following definitions:
\begin{align*}
    &\mathcal{P}_0= \Zthree d_{\Delta_M}^2, \mathcal{P}_{a_{i}}=2\Zthree d_{\Delta_M}^2, \mathcal{P}_{b_{i}}=-\Zthree d_{\Delta_M}^2,\\
    &\mathcal{P}_{c_{l}}=\Zthree d_{\Delta_M}\left(2d_{\Delta_M}-2l+1\right), \\
    &\mathcal{P}_{d_{l}}=-\Zthree d_{\Delta_M}\left(d_{\Delta_M}-l+1\right),
\end{align*}
 \noindent for $i \in [1,\dMj]$ and $l \in [1,d_{\Delta_M}]$, where $d_{\Delta_M} = d_M-d_{{M_j}}$.

\end{document}